\newtheorem{theorem}{Theorem}
\theoremstyle{plain}
\newtheorem{lemma}{Lemma}
\newtheorem{remark}{Remark}
\numberwithin{equation}{section}
\begin{document}
\title[Nonlocal problems involving Orlicz spaces]{A sub-supersolution
approach for some classes of nonlocal problems involving Orlicz spaces}
\subjclass[2010]{Primary: 35J60; Secondary: 35Q53}
\keywords{nonlocal problems, $\Phi $-Laplacian, sub-supersolution. }
\author{}
\maketitle
\vspace{-0.5cm}

\begin{center}
	\author{\sc Giovany J. M. Figueiredo}
\end{center}
\vspace{-0.2cm}
\begin{center}
	Universidade de Bras\'{i}lia, Departamento de Matem\'{a}tica, CEP: 70910-900, \\ Bras\'{i}lia-DF, Brazil
\end{center}
\vspace{0.1cm}
\begin{center}
	\author{\sc Abdelkrim Moussaoui}
\end{center}
\vspace{-0.2cm}
\begin{center}
	A. Mira Bejaia University, Biology Department, Targa Ouzemour, 06000, \\ Bejaia- Algeria
\end{center}
\vspace{0.1cm}
\begin{center}
	\author{\sc Gelson C.G. dos Santos}
\end{center}
\vspace{-0.2cm}
\begin{center}
	Universidade Federal do Par\'{a}, Faculdade de Matem\'{a}tica, CEP: 66075-110, Bel\'{e}m-PA, Brazil
\end{center}
\vspace{0.1cm}
\begin{center}
	\author{\sc Leandro S. Tavares}
\end{center}
\vspace{-0.2cm}
\begin{center}
	Universidade Federal do Cariri, Centro de Ci\^{e}ncias e Tecnologia, CEP:63048-080, Juazeiro do Norte-CE, Brazil
\end{center}

\begin{abstract}
In the present paper we study the existence of solutions for some nonlocal
problems involving Orlicz-Sobolev spaces. The approach is based on
sub-supersolutions.
\end{abstract}

%\email{cgelson@ymail.com}
%\thanks{C.O. Alves was partially supported by CNPq/Brazil 304036/2013-7 and
%INCT-MAT}
%\author{Giovany M. Figueiredo
%	\ \\
%	Universidade Federal do Par\'{a}, Faculdade de Matem\'{a}tica, CEP: 66075-110 Bel\'{e}m-PA, Brazil.
%	\\	
%}
%\address{Giovany M. Figueiredo \\
%Universidade de Brasília, Departamento de Matem\'{a}tica , CEP: 70910-900 Brasília-DF, Brazil .}
%\email{giovany@unb.br}
%\thanks{A. Moussaoui was supported by CNPq/Brazil 402792/2015-7.}
%\author{Leandro da S. Tavares}
%\address{Leandro da S. Tavares\\
%Universidade Federal do Cariri, Centro de Ci\^{e}ncias e Tecnologia, CEP:63048-080, Juazeiro do Norte-CE,
%Brazil.}
%\email{leandro.tavares@ufca.edu.br}
%\vspace{-0.5cm}

%\vspace{-0.2cm}

%\vspace{0.1cm}

%\vspace{-0.2cm}

%\begin{center}
%A. Mira Bejaia University, Biology Department, Targa Ouzemour, 06000, \\[0pt]
%Bejaia- Algeria
%\end{center}

%\vspace{0.1cm}

%\vspace{-0.2cm}

%\begin{center}
%Universidade Federal do Par\'{a}, Faculdade de Matem\'{a}tica, CEP:
%66075-110, Bel\'{e}m-PA, Brazil
%\end{center}

%\vspace{0.1cm}

%\vspace{-0.2cm}

%\begin{center}
%Universidade Federal do Cariri, Centro de Ci\^{e}ncias e Tecnologia,
%CEP:63048-080, Juazeiro do Norte-CE, Brazil
%\end{center}

\section{Introduction}

Let $\Omega $ be a bounded domain in $\mathbb{R}^{N}$ $(N\geq 3)$ with $C^{2}
$ boundary $\partial \Omega $. In the present paper we focus on the problems
of quasilinear elliptic nonlocal equations 
\begin{equation*}
\left\{ 
\begin{array}{rcl}
\label{problema-(P)}-\Delta _{\Phi }u & = & f(u)|u|_{{L^{\Psi }}}^{\alpha
}+g(u)|u|_{L^{\Lambda }}^{\gamma }\;\;\mbox{in}\;\;\Omega , \\ 
u & = & 0\;\;\mbox{on}\;\;\partial \Omega 
\end{array}%
\right. \eqno{(P_1)}
\end{equation*}%
and%
\begin{equation*}
\left\{ 
\begin{array}{rcl}
\label{problema-(P)}-\Delta _{\Phi _{1}}u & = & f_{1}(v)|v|_{L^{\Psi
_{1}}}^{\alpha _{1}}+g_{1}(v)|v|_{L^{\Lambda _{1}}}^{\gamma _{1}}\;\;%
\mbox{in}\;\;\Omega , \\ 
-\Delta _{\Phi _{2}}v & = & f_{2}(u)|u|_{L^{\Psi _{2}}}^{\alpha
_{2}}+g_{2}(u)|u|_{L^{\Lambda _{2}}}^{\gamma _{2}}\;\;\mbox{in}\;\;\Omega ,
\\ 
u=v & = & 0\;\;\mbox{on}\;\;\partial \Omega ,%
\end{array}%
\right. \eqno{(P_2)}
\end{equation*}%
where $\alpha _{i},\gamma _{i},$ $i=0,1,2,$ with $\alpha _{0}:=\alpha $ and $%
\gamma _{0}:=\gamma ,$ are positive constants, $|.|_{L^{\Psi }}$ (resp.\ $%
|.|_{L^{\Lambda }}$) denotes the norm in the Orlicz space $L^{\Psi }(\Omega )
$ (resp. $L^{\Lambda }(\Omega )$) and the nonlinearities $%
f_{i},g_{i}:[0,+\infty )\rightarrow \lbrack 0,+\infty ),$ $i=0,1,2,$ with $%
f_{0}:=f$ and $g_{0}:=g$, are continuous and nondecreasing functions. Here, $%
\Delta _{\Phi _{i}}$ stands for the $\Phi _{i}-$Laplacian operator, that is, 
$\Delta _{\Phi _{i}}w=\mathrm{div}\,(\phi _{i}(|\nabla w|)\nabla w),$ for $%
i=0,1,2$, where $\Phi _{i}:\mathbb{R}\rightarrow \mathbb{R}$ are $N$%
-functions of the form 
\begin{equation}
\Phi _{i}(t):=\int_{0}^{|t|}\phi _{i}(s)sds,  \label{phi}
\end{equation}%
with $\phi _{i}:[0,+\infty )\rightarrow \lbrack 0,+\infty )$ being $C^{1}$
functions satisfying 
\begin{equation*}
(t\phi _{i}(t))^{\prime };\quad \forall t>0\leqno{(\phi_1)}
\end{equation*}%
\begin{equation*}
\lim_{t\rightarrow 0^{+}}t\phi _{i}(t)=0,\quad \lim_{t\rightarrow +\infty
}t\phi _{i}(t)=+\infty \leqno{(\phi_2)}
\end{equation*}%
and that there exist $l_{i},m_{i}\in (1,N),$ $i=0,1,2$ such that%
\begin{equation*}
l_{i}-1\leq \frac{(\phi _{i}(t)t)^{^{\prime }}}{\Phi _{i}(t)}\leq m_{i}-1,%
\text{ \ }\forall t>0,\leqno{(\phi_3)}
\end{equation*}%
where $\phi _{0}:=\phi ,$ $l_{0}:=l$ and $m_{0}:=m.$ Note that the condition 
$(\phi _{3})$ implies that 
\begin{equation*}
l_{i}\leq \frac{\phi _{i}(t)t^{2}}{\Phi _{i}(t)}\leq m_{i},\text{ \ }\forall
t>0,\leqno{(\phi_3)'}
\end{equation*}%
for $i=0,1,2.$ In addition, $\Psi _{i}$ and $\Lambda _{i},$ for $i=0,1,2,$
with $\Psi _{0}:=\Psi $ and $\Lambda _{0}=\Lambda ,$ are $N$-functions
satisfying the $\Delta _{2}$ condition. \newline

According to hypotheses $(\phi _{1})-(\phi _{3})$, a wide class of operators
can be incorporated in problems $(P_{1})$ and $(P_{2})$, for instance:

\begin{itemize}
\item $\phi (t)=p|t|^{p-2},$ $t>0,$ with $p>1.$ The operator $\Delta _{\Phi }
$ is the $p$-Laplacian operator.

\item $\phi (t)=p|t|^{p-2}+q|t|^{q-2},$ $t>0,1<p<q.$ Here $\Delta _{\Phi }$
is the $(p,q)$-Laplacian operator applied in quantum physics (see \cite%
{Benci-Fortunato}).

\item $\phi (t)=2\gamma (1+t^{2})^{\gamma -1},$ $t>0$ and $\gamma >1$. $%
\Delta _{\Phi }$ appears in nonlinear elasticity problems \cite{FN2}.

\item $\phi (t)=\gamma \frac{(\sqrt{1+t^{2}}-1)^{\gamma -1}}{\sqrt{1+t^{2}}},
$ $t>0$ and $\gamma \geq 1$. The operator $\Delta _{\Phi }$ arises in
minimal surfaces theory for $\gamma =1$ (see \cite[page 128]{Daco}) and
nonlinear elasticity for $\gamma >1$ (see \cite{FN0}).

\item $\phi (t)=\frac{pt^{p-2}(1+t)\ln (1+t)+t^{p-1}}{1+t},$ $t>0$. The
operator $\Delta _{\Phi }$ appears in plasticity problems (see \cite{FN2}).%
\newline
\end{itemize}

However, the lack of properties such as homogeneity complicates handling the
nonlinear $\Phi _{i}-$Laplacian operator which, therefore, constitutes a
serious obstacle in the study of the problems $(P_{1})$ and $(P_{2})$.
Thereby, it requires relevant topics of nonlinear functional analysis,
especially theory of Orlicz and Orlicz-Sobolev spaces (see, e.g. \cite{AF,RR}
and their abundant reference). Another mathematical difficulty encountered
comes out from the nonlocal caracter of $(P_{1})$ and $(P_{2})$. It is due
to the presence of terms $|\cdot |_{{L^{\Psi _{i}}}}$ and $|\cdot
|_{L^{\Lambda _{i}}}$ that make the equations in $(P_{1})$ and $(P_{2})$ no
longer a pointwise identities. For more inquiries on nonlocal problems we
refer to \cite{CL-, Chen-Gao} where systems of elliptic equations are
examined. With regard to the scalar case, we quote the papers \cite%
{AC,ACM,CF,kirchhoff,Ma,PZ,ZP}. Such problems are important for applications
in view of the significant number of physical phenomena formulated into
nonlocal mathematical models. For instance, they appear in the study of the
flow of a fluid through a homogeneous isotropic rigid porous medium, as well
as in the study of population dynamics (see, e.g., \cite{DDX, S}).

Relevant contributions regarding nonlocal problems fit the setting of $%
(P_{1})$ and $(P_{2})$. In particular, Alves \& Covei \cite{Alves-Covei}
applied the sub-supersolution method to show the existence results for
problem involving Kirchhoff-type operator 
\begin{equation*}
\left\{ 
\begin{array}{rcl}
-a\left( \int_{\Omega }u\right) \Delta u & = & h_{1}(x,u)f\left(
\int_{\Omega }|u|^{p}\right) +h_{2}(x,u)g\left( \int_{\Omega }|u|^{r}\right)
\;\;\mbox{in}\;\;\Omega , \\ 
u & = & 0\;\;\mbox{on}\;\;\partial \Omega ,%
\end{array}%
\right. 
\end{equation*}%
where $a,f,g,$ and $h_{i}$ ($i=1,2$) are given functions. The case of
nonlocal problems driven by the $p$-Laplacian differential operator is
investigated by \cite{CFL}. Combining sub-supersolutions method with the
classical theorem due to Rabinowitz \cite{rabinowitz}, the authors proved
the the existence of solutions for quasilinear problem of the form 
\begin{equation*}
\left\{ 
\begin{array}{rcl}
-\Delta _{p}u & = & |u|_{L^{q}}^{\alpha (x)}\;\;\mbox{in}\;\;\Omega , \\ 
u & = & 0\;\;\mbox{on}\;\;\partial \Omega ,%
\end{array}%
\right. \eqno{(P)}
\end{equation*}%
where $\alpha $ is a nonnegative function defined in $\overline{\Omega }.$
Then, they extend the results to the nonlocal quasilinear elliptic system 
\begin{equation*}
\left\{ 
\begin{array}{rcl}
-\Delta _{p_{1}}u & = & |v|_{L^{q_{1}}}^{\alpha _{1}(x)}\;\;\mbox{in}%
\;\;\Omega , \\ 
-\Delta _{p_{2}}v & = & |u|_{L^{q_{2}}}^{\alpha _{2}(x)}\;\;\mbox{in}%
\;\;\Omega , \\ 
u & = & 0\;\;\mbox{on}\;\;\partial \Omega .%
\end{array}%
\right. \eqno{(S)}
\end{equation*}%
The semilinear case, that is when $p_{1}=p_{2}=2$ is investigated by Corr%
\^{e}a-Lopes \cite{CL-} and Chen-Gao \cite{Chen-Gao} for systems of the
form 
\begin{equation}
\left\{ 
\begin{array}{rcl}
-\Delta u^{m} & = & f(x,u)|v|_{L^{p}}^{\alpha }\ \text{in}\ \Omega , \\ 
-\Delta v^{n} & = & g(x,v)|u|_{L^{q}}^{\beta }\ \text{in}\ \Omega , \\ 
u & = & v=0\ \text{on}\ \partial \Omega 
\end{array}%
\right.   \label{stat}
\end{equation}%
with $m,n\geq 1$ and $\alpha ,\beta >0$. The existence of solutions is
obtained by means of topological methods, namely, Galerkin method, fixed
point theory as well as sub-supersolutions techniques.

Motivated by the aforementioned papers, our goal is to establish the
existence of (positive) solutions for problems $(P_{1})$ and $(P_{2})$
involving sublinear and concave-convex terms. The approach relies on the
method of sub-supersolution. However, besides the nonlocal nature of the
problems, this method cannot be easily implemented due to the presence of $%
\Phi _{i}$-Laplacian operator in the principle part of the equations. At
this point, to the best of our knowledge, it is for the first time when
nonlocal problems involving $\Phi _{i}$-Laplacian operator are studied. A
significant feature of our result lies in the obtaining of the sub- and
supersolution in the Orlicz-Sobolev spaces setting and, involving nonlocal
terms. This is achieved by the choice of suitable explicit functions with an
adjustment of adequate constants.

The rest of the paper is organized as follows: Section 2 is devoted to the
needed properties in Orlicz and Orlicz-Sobolev spaces. Section 3 (resp.
Section 4) contains existence results for problem $(P_{1})$ (resp. $(P_{2})$%
)\ involving sublinear and concave-convex structures.

\section{Preliminaries}

In this section we recall some results on Orlicz-Sobolev spaces. We say that
a continuous function $\Phi : \mathbb{R} \rightarrow [0,+\infty)$ is a
N-function if:

\begin{itemize}
\item[$(i)$] $\Phi$ is convex,

\item[$(ii)$] $\Phi(t) = 0 \Leftrightarrow t = 0 $,

\item[$(iii)$] $\displaystyle\lim_{t\rightarrow0}\frac{\Phi(t)}{t}=0$ and $%
\displaystyle\lim_{t\rightarrow+\infty}\frac{\Phi(t)}{t}= +\infty$,

\item[$(iv)$] $\Phi$ is even.
\end{itemize}

We say that a N-function $\Phi$ verifies the $\Delta_{2}$-condition, and we
denote by $\Phi \in \Delta_2$, if 
\begin{equation*}
\Phi(2t) \leq K\Phi(t),\quad \forall t\geq t_0,
\end{equation*}
for some constants $K,t_0 > 0$. Regarding the condition $\Delta_2$ it is
important to note that such property is satisfied under the condition $%
(\phi_3)^{^{\prime }}$ in the case of the $N-$function is given by %
\eqref{phi}.

We fix an open set $\Omega \subset \mathbb{R}^{N}$ and a N-function $\Phi$.
We define the Orlicz space associated with $\Phi$ as follows 
\begin{equation*}
L^{\Phi}(\Omega) = \left\{ u \in L^{1}(\Omega) \colon \ \int_{\Omega} \Phi%
\Big(\frac{|u|}{\lambda}\Big)dx < + \infty \ \ \mbox{for some}\ \ \lambda >0
\right\}.
\end{equation*}
The space $L^{\Phi}(\Omega)$ is a Banach space endowed with the Luxemburg
norm given by 
\begin{equation*}
| u |_{L^\Phi} = \inf\left\{ \lambda > 0 : \int_{\Omega}\Phi\Big(\frac{|u|}{%
\lambda}\Big)dx \leq1\right\}.
\end{equation*}

%In the case of $|\Omega|= +\infty$ we will consider that $\Phi \in \Delta_2$ if $t_0=0$ in \eqref{delta-2}.

\begin{lemma}
\cite{FN}\label{min-max} Consider $\Phi$ a $N$-function of the form %
\eqref{phi} and satisfying $(\phi_1), (\phi_2)$ and $(\phi_3)$. Set 
\begin{equation*}
\zeta_0(t)=\min\{t^\ell,t^m\}~~\mbox{and}~~ \zeta_1(t)=\max\{t^\ell,t^m\},~~
t\geq 0.
\end{equation*}
Then $\Phi$ satisfies 
\begin{equation*}
\zeta_0(t)\Phi(\rho)\leq\Phi(\rho t)\leq \zeta_1(t)\Phi(\rho),~~ \rho, t> 0,
\end{equation*}
\begin{equation*}
\zeta_0(|u|_{\Phi})\leq\int_\Omega\Phi(u)dx\leq \zeta_1(|u|_{\Phi}),~ u\in
L_{\Phi}(\Omega).
\end{equation*}
\end{lemma}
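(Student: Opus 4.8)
The plan is to extract from $(\phi_3)'$ a differential inequality for $\log\Phi$ and then integrate it. First I would observe that, since $\Phi(t)=\int_0^{|t|}\phi(s)s\,ds$, one has $\Phi'(t)=t\phi(t)$ for $t>0$, so that $(\phi_3)'$ can be recast as the two-sided bound on the logarithmic derivative
\begin{equation*}
\frac{\ell}{t}\leq \frac{\Phi'(t)}{\Phi(t)}=\bigl(\log\Phi\bigr)'(t)\leq \frac{m}{t},\quad t>0.
\end{equation*}
Fixing $\rho>0$ and $t\geq 1$, I would integrate this chain over $[\rho,\rho t]$ to get $\ell\log t\leq \log\Phi(\rho t)-\log\Phi(\rho)\leq m\log t$, and exponentiate to obtain $t^{\ell}\leq \Phi(\rho t)/\Phi(\rho)\leq t^{m}$; for $0<t<1$ the same integration over $[\rho t,\rho]$ reverses the roles and yields $t^{m}\leq \Phi(\rho t)/\Phi(\rho)\leq t^{\ell}$. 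In either case the smaller of $t^\ell,t^m$ is $\zeta_0(t)$ and the larger is $\zeta_1(t)$, which gives the pointwise inequality $\zeta_0(t)\Phi(\rho)\leq \Phi(\rho t)\leq \zeta_1(t)\Phi(\rho)$ for all $\rho,t>0$.

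For the modular estimate I would homogenize using the Luxemburg norm. For $u\in L^\Phi(\Omega)$ with $u\neq 0$, set $\lambda=|u|_{\Phi}$ and $v=u/\lambda$, so that $|v|_\Phi=1$; since $\Phi\in\Delta_2$ (which holds here by $(\phi_3)'$) the modular attains the value $\int_\Omega\Phi(v)\,dx=1$. Applying the first inequality pointwise with $t=\lambda$ (a constant) and $\rho=|v(x)|$, and using that $\Phi$ is even so that $\Phi(\rho t)=\Phi(|u(x)|)=\Phi(u(x))$, I obtain
\begin{equation*}
\zeta_0(|u|_\Phi)\,\Phi(v(x))\leq \Phi(u(x))\leq \zeta_1(|u|_\Phi)\,\Phi(v(x)).
\end{equation*}
Integrating over $\Omega$ and invoking $\int_\Omega\Phi(v)\,dx=1$ then delivers $\zeta_0(|u|_\Phi)\leq \int_\Omega\Phi(u)\,dx\leq \zeta_1(|u|_\Phi)$, completing the lemma.

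The routine parts are the integration and exponentiation; the points requiring care are twofold. First, the case split $t\geq 1$ versus $0<t<1$ must be tracked so that the bounds are correctly identified with $\min$ and $\max$, since the ordering of $t^\ell$ and $t^m$ flips across $t=1$. Second, and this is the main obstacle, the passage to the modular estimate relies on the identity $\int_\Omega\Phi(u/|u|_\Phi)\,dx=1$; this normalization is exactly where the $\Delta_2$ condition is needed, guaranteeing that the modular is finite and continuous and that the infimum defining the Luxemburg norm is attained. Once this is in hand the scaling argument is immediate.
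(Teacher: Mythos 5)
The paper gives no proof of this lemma; it is quoted verbatim from the reference \cite{FN}, so there is nothing internal to compare against. Your argument is correct and is in fact the standard derivation of that cited result: recasting $(\phi_3)'$ as $\ell/t\leq(\log\Phi)'(t)\leq m/t$ and integrating over $[\rho,\rho t]$ (with the sign/ordering bookkeeping across $t=1$ that you correctly flag) gives the pointwise estimate, and the normalization $\int_\Omega\Phi(u/|u|_{\Phi})\,dx=1$ --- valid here because $(\phi_3)'$ forces $\Phi(2t)\leq 2^{m}\Phi(t)$, hence $\Delta_2$ globally and attainment of the Luxemburg infimum --- yields the modular estimate by the scaling you describe. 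The only point worth adding for completeness is that the pointwise inequality is applied with $\rho=|v(x)|>0$, so the null set where $v(x)=0$ should be dismissed separately (both sides vanish there since $\Phi(0)=0$); this is cosmetic and does not affect the argument.
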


For a $N-$function $\Phi$, the corresponding Orlicz-Sobolev space is defined
as the Banach space 
\begin{equation*}
W^{1, \Phi}(\Omega) = \Big\{ u \in L^{\Phi}(\Omega) \ :\ \frac{\partial u}{%
\partial x_{i}} \in L^{\Phi}(\Omega), \quad i = 1, ..., N\Big\},
\end{equation*}
endowed with the norm 
\begin{equation*}
\Vert u \Vert_{1,\Phi} = |\nabla u|_{L^\Phi} + |u|_{L^\Phi}.
\end{equation*}

The $\Delta_2$-condition also implies that 
\begin{equation*}
u_n \to u \,\,\, \mbox{in} \,\,\, L_{\Phi}(\Omega) \Longleftrightarrow
\int_{\Omega}\Phi(|u_n-u|) \to 0
\end{equation*}
and 
\begin{equation*}
u_n \to u \,\,\, \mbox{in} \,\,\, W^{1,\Phi}(\Omega) \Longleftrightarrow
\int_{\Omega}\Phi(|u_n-u|) \to 0 \,\,\, \mbox{and} \,\,\,
\int_{\Omega}\Phi(|\nabla u_n- \nabla u|) \to 0.
\end{equation*}

Consider $u,v\in W^{1,\Phi}(\Omega)$ we will say that $-\Delta_{\Phi}u\leq
-\Delta_{\Phi}v$ in $\Omega$ if 
\begin{equation*}
\int_{\Omega}\phi(|\nabla u|)\nabla u
\nabla\varphi\leq\int_{\Omega}\phi(|\nabla v|)\nabla v \nabla\varphi,
\end{equation*}
for all $\varphi\in W_{0}^{1,\Phi}(\Omega)$ with $\varphi\geq0.$

The following results will be often used.

\begin{lemma}
\cite[Lemma 4.1]{Tan-Fang}\label{comparison} Let $u,v \in W^{1,\Phi}(\Omega)$
with $-\Delta_{\Phi}u\leq -\Delta_{\Phi}v$ in $\Omega$ and $u \leq v$ in $%
\partial \Omega$ (i.e $(u-v)^{+} \in W^{1,\Phi}_{0}(\Omega)$), then $u(x)
\leq v(x)$ a.e in $\Omega.$
\end{lemma}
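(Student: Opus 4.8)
The plan is to prove this weak comparison principle by the classical test-function argument for monotone operators, the only new ingredient being the strict monotonicity of the vector field $a(\xi):=\phi(|\xi|)\xi$ induced by the structure conditions $(\phi_1)$--$(\phi_3)$. First I would set $\Omega^{+}:=\{x\in\Omega : u(x)>v(x)\}$ and take $\varphi:=(u-v)^{+}$ as test function. By hypothesis $(u-v)^{+}\in W^{1,\Phi}_{0}(\Omega)$ and clearly $\varphi\geq 0$, so $\varphi$ is admissible in the weak inequality $-\Delta_{\Phi}u\leq-\Delta_{\Phi}v$. Since $\nabla\varphi=\nabla(u-v)$ a.e.\ on $\Omega^{+}$ and $\nabla\varphi=0$ a.e.\ on $\Omega\setminus\Omega^{+}$, substituting $\varphi$ and subtracting the two sides yields
\begin{equation*}
\int_{\Omega^{+}}\big[\phi(|\nabla u|)\nabla u-\phi(|\nabla v|)\nabla v\big]\cdot(\nabla u-\nabla v)\,dx\leq 0.
\end{equation*}

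Next I would invoke the strict monotonicity of $a$. Condition $(\phi_1)$ (which makes $t\mapsto t\phi(t)$ strictly increasing) together with $(\phi_2)$ guarantees
\begin{equation*}
\big[a(\xi)-a(\eta)\big]\cdot(\xi-\eta)>0\qquad\text{for all }\xi\neq\eta\text{ in }\mathbb{R}^{N}.
\end{equation*}
Consequently the integrand in the displayed inequality is nonnegative a.e.\ and vanishes only where $\nabla u=\nabla v$. Combined with the sign $\leq 0$, this forces the integrand to vanish a.e.\ on $\Omega^{+}$, whence $\nabla u=\nabla v$ a.e.\ on $\Omega^{+}$, that is $\nabla(u-v)^{+}=0$ a.e.\ in $\Omega$. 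I would then conclude that $(u-v)^{+}$ is (locally) constant, and since it lies in $W^{1,\Phi}_{0}(\Omega)$ and thus has zero boundary trace, this constant is $0$; hence $(u-v)^{+}=0$, i.e.\ $u\leq v$ a.e.\ in $\Omega$.

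The main obstacle is the passage from the scalar information encoded in $(\phi_1)$--$(\phi_3)$ to the strict monotonicity of the vector field $a(\xi)=\phi(|\xi|)\xi$ on $\mathbb{R}^{N}$. I would establish it by writing $\big[a(\xi)-a(\eta)\big]\cdot(\xi-\eta)\geq(|\xi|-|\eta|)\big(\phi(|\xi|)|\xi|-\phi(|\eta|)|\eta|\big)$ via $\xi\cdot\eta\leq|\xi|\,|\eta|$, and then distinguishing the case $|\xi|\neq|\eta|$ (where the right-hand side is strictly positive by the strict monotonicity of $t\mapsto t\phi(t)$) from the case $|\xi|=|\eta|=r$ with $\xi\neq\eta$ (where the bracket reduces to $\phi(r)|\xi-\eta|^{2}>0$, using $\phi(r)>0$ for $r>0$). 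Once this algebraic inequality is in hand, the remaining steps are routine, and it is precisely this monotonicity that replaces the homogeneity unavailable for the $\Phi$-Laplacian.
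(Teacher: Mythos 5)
Your argument is correct: testing the defining inequality with $(u-v)^{+}$, reducing to the strict monotonicity of $\xi\mapsto\phi(|\xi|)\xi$ (which your two-case analysis establishes properly from the strict increase of $t\mapsto t\phi(t)$ in $(\phi_1)$ together with $\phi(r)>0$ for $r>0$, the latter following from $(\phi_3)'$ and $\Phi(r)>0$), and then concluding from $\nabla(u-v)^{+}=0$ plus the zero trace. The paper itself gives no proof — it quotes the lemma from Tan and Fang — and their Lemma 4.1 is proved by precisely this classical monotone-operator test-function argument, so your proof coincides with the intended one.
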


%We say that $\Omega \subset \mathbb{R}^N$ is an admissible domain, if  the embedding $W^{1,1}(\Omega) \hookrightarrow L^{q}(\Omega)$ is continuous for all $q \in [1,\frac{N}{N-1}].$

\begin{lemma}
\cite[Lemma 4.5]{Tan-Fang}\label{Tan-Fang} Let $\lambda >0$ and consider $%
z_{\lambda }$ the unique solution of the problem 
\begin{equation}
\begin{aligned} \left\{\begin{array}{rcl} -\Delta_{\Phi}z_{\lambda}
&=&\lambda\;\;\mbox{in} \;\;\Omega,\\ \vspace{.2cm}
z_{\lambda}&=&0\;\;\mbox{on}\;\;\partial\Omega, \end{array} \right.
\end{aligned}  \label{probl-linear-lambda}
\end{equation}%
where $\Phi $ is given by \eqref{phi} and $\Omega \subset \mathbb{R}^{N}$ is
an admissible domain. Define $\rho _{0}=\frac{1}{2|\Omega |^{\frac{1}{N}%
}C_{0}}.$ If $\lambda \geq \rho _{0},$ then $|z_{\lambda }|_{L^{\infty
}}\leq C^{\ast }\lambda ^{\frac{1}{l-1}}$ and $|z_{\lambda }|_{L^{\infty
}}\leq C_{\ast }\lambda ^{\frac{1}{m-1}}$ if $\lambda <\rho _{0}.$ Here $%
C^{\ast }$ and $C_{\ast }$ are positive constants dependending only on $l,m,N
$ and $\Omega $.
\end{lemma}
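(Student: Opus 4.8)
The plan is to obtain the two-sided $\lambda$-dependence by comparison with an explicit smooth supersolution, using Lemma~\ref{comparison}. The two different powers $\lambda^{1/(l-1)}$ and $\lambda^{1/(m-1)}$ will emerge from the fact that the radial ``flux'' $a(t):=t\phi(t)=\Phi'(t)$ behaves like $t^{l-1}$ for $t\geq 1$ and like $t^{m-1}$ for $t\leq 1$, a direct consequence of $(\phi_3)'$ together with Lemma~\ref{min-max}. Concretely, from $\Phi(t)\geq\Phi(1)t^{l}$ for $t\geq1$ and $\Phi(t)\geq\Phi(1)t^{m}$ for $t\leq1$, and $a(t)=\tfrac{\phi(t)t^{2}}{\Phi(t)}\cdot\tfrac{\Phi(t)}{t}\geq l\,\tfrac{\Phi(t)}{t}$, one gets $a(t)\geq l\Phi(1)t^{l-1}$ for $t\geq1$ and $a(t)\geq l\Phi(1)t^{m-1}$ for $t\leq1$.

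First I would normalise the geometry. Since $\Omega$ is bounded, I translate the origin to a point outside $\overline{\Omega}$ so that there exist $0<R_{1}\leq R_{2}$ with $\Omega\subset\{x:R_{1}\leq|x|\leq R_{2}\}$; keeping $|x|$ bounded away from $0$ on $\Omega$ will be essential. I then set the radial barrier
\[
\bar z(x)=A\,(R_{2}^{2}-|x|^{2}),\qquad A>0\ \text{to be chosen}.
\]
On $\partial\Omega$ one has $|x|\leq R_{2}$, hence $\bar z\geq 0=z_{\lambda}$ there, and since $\bar z$ is smooth on $\overline{\Omega}$ we have $\bar z\in W^{1,\Phi}(\Omega)\cap L^{\infty}(\Omega)$ and $(z_{\lambda}-\bar z)^{+}\in W_{0}^{1,\Phi}(\Omega)$.

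Next, writing $r=|x|$ and using the radial form of the operator, a direct computation gives, with $a(t)=t\phi(t)$,
\[
-\Delta_{\Phi}\bar z=(N-1)\frac{a(2Ar)}{r}+2A\,a'(2Ar)\ \geq\ (N-1)\frac{a(2Ar)}{r}\ \geq\ \frac{N-1}{R_{2}}\,a(2AR_{1}),
\]
where I used $a'\geq0$ (which is $(\phi_{1})$) and the monotonicity of $a$ together with $R_{1}\leq r\leq R_{2}$. Hence $\bar z$ is a supersolution, i.e.\ $-\Delta_{\Phi}\bar z\geq\lambda$ in $\Omega$, as soon as $a(2AR_{1})\geq\frac{R_{2}}{N-1}\lambda$. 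Invoking the lower bounds on $a$ above, I choose $A=c\,\lambda^{1/(l-1)}$ in the regime $2AR_{1}\geq1$ (large $\lambda$) and $A=c'\,\lambda^{1/(m-1)}$ in the regime $2AR_{1}\leq1$ (small $\lambda$), with $c,c'$ depending only on $l,m,N,\Phi$ and $R_{1},R_{2}$; the threshold $2AR_{1}\gtrless1$ is exactly the dichotomy $\lambda\gtrless\rho_{0}$.

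Finally I would apply Lemma~\ref{comparison}: since $-\Delta_{\Phi}z_{\lambda}=\lambda\leq-\Delta_{\Phi}\bar z$ in $\Omega$ and $z_{\lambda}\leq\bar z$ on $\partial\Omega$, it yields $z_{\lambda}\leq\bar z$ in $\Omega$; comparing $z_{\lambda}$ with the subsolution $0$ gives $z_{\lambda}\geq0$, so $|z_{\lambda}|_{L^{\infty}}\leq\sup_{\Omega}\bar z\leq A R_{2}^{2}$, which is $C^{\ast}\lambda^{1/(l-1)}$ for $\lambda\geq\rho_{0}$ and $C_{\ast}\lambda^{1/(m-1)}$ for $\lambda<\rho_{0}$, the constants depending only on $l,m,N$ and $\Omega$. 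The main obstacle is the lack of homogeneity of $\Delta_{\Phi}$: one cannot simply rescale a single fixed solution, so the amplitude $A$ must be tuned by hand, and the degeneracy of $a(2Ar)/r$ as $r\to0$ is what forces the preliminary translation placing $\Omega$ inside an annulus. It is precisely this tuning, combined with the $t^{l-1}$ versus $t^{m-1}$ behaviour of $a$, that produces the clean split into the two power laws.
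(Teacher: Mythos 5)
The paper does not prove this lemma at all: it is imported verbatim as \cite[Lemma 4.5]{Tan-Fang}, and the form of the threshold $\rho_0=\frac{1}{2|\Omega|^{1/N}C_0}$ (an isoperimetric-type constant times $|\Omega|^{1/N}$) indicates that the original argument is a level-set/symmetrization estimate in the spirit of Talenti, where one bounds $a(|\nabla z_\lambda|)$ on the level sets $\{z_\lambda>t\}$ by $\lambda|\{z_\lambda>t\}|^{1/N}/C_0^{-1}$ and the dichotomy $\lambda|\Omega|^{1/N}C_0\gtrless\frac12$ decides which branch of $a^{-1}$ ($t^{1/(l-1)}$ or $t^{1/(m-1)}$) is invoked. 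Your proof is a genuinely different, self-contained route: an explicit radial barrier $\bar z=A(R_2^2-|x|^2)$ on an annulus containing $\Omega$, combined with Lemma \ref{comparison}. The computation $-\Delta_\Phi\bar z=(N-1)\frac{a(2Ar)}{r}+2A\,a'(2Ar)\geq\frac{N-1}{R_2}a(2AR_1)$ is correct (the dropped term is nonnegative by $(\phi_1)$, $N\geq3$, and the translation keeps $|\nabla\bar z|=2Ar$ bounded away from $0$ so the supersolution inequality holds in the weak sense), and the lower bounds $a(t)\geq l\Phi(1)t^{l-1}$ for $t\geq1$, $a(t)\geq l\Phi(1)t^{m-1}$ for $t\leq1$ do follow from $(\phi_3)'$ and Lemma \ref{min-max} as you say. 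What the barrier buys is elementarity and transparency; what it loses is sharpness of constants.

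Two small points to tidy up. First, your claim that the dichotomy $2AR_1\gtrless1$ ``is exactly'' $\lambda\gtrless\rho_0$ is not justified: your construction produces its own threshold $\lambda_1$ (where the required amplitude satisfies $2AR_1=1$), which has no reason to equal $\rho_0$. This is harmless but should be said: on the compact interval between $\min\{\rho_0,\lambda_1\}$ and $\max\{\rho_0,\lambda_1\}$ the two powers $\lambda^{1/(l-1)}$ and $\lambda^{1/(m-1)}$ are comparable up to a fixed factor, so the stated bounds for $\lambda\geq\rho_0$ and $\lambda<\rho_0$ follow after enlarging $C^\ast$ and $C_\ast$. Second, your constants depend on $\Phi(1)$ and on the annulus radii $R_1,R_2$ (i.e., on $\Phi$ and on the chosen translation of $\Omega$), not only on $l,m,N,\Omega$ as the statement asserts; this extra dependence is intrinsic to any such estimate and is presumably hidden in $C_0$ in the original, but you should acknowledge it rather than match the statement's attribution of dependencies literally.
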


Regarding to the function $z_{\lambda}$ of the previous result, it follows
from \cite[page 320]{Lieberman} and \cite[Lemma 4.2]{Tan-Fang} that $%
z_{\lambda} \in C^{1}(\overline{\Omega}) $ with $z_\lambda >0$ in $\Omega.$

\section{The scalar case}

We say that $u\in W_{0}^{1,\Phi }(\Omega )\cap L^{\infty }(\Omega )$ is a
(weak) solution of $(P_{1})$ if 
\begin{equation*}
\int_{\Omega }\phi (|\nabla u|)\nabla u\nabla \varphi =\int_{\Omega
}(f(u)|u|_{{\Psi }}^{\alpha }+g(u)|u|_{{\Lambda }}^{\gamma })\varphi ,
\end{equation*}%
for all $\varphi \in W_{0}^{1,\Phi }(\Omega ).$

Given $u,v\in \mathcal{S}(\Omega):= \{u: \Omega \rightarrow \mathbb{R}: u \ 
\text{is measurable}\},$ we write $u \leq v$ if $u(x) \leq v(x)$ a.e in $%
\Omega.$ We denote by $[u,v]$ the set 
\begin{equation*}
[u,v]:=\bigl\{w\in \mathcal{S}(\Omega): u(x)\leq w(x)\leq v(x)\;\text{a.e in}%
\;\Omega\bigl\}.
\end{equation*}

We say that $(\underline{u},\overline{u})$ is a sub-super solution pair for $%
(P)$ if $\underline{u},\overline{u}\in $ $W_{0}^{1,\Phi }(\Omega )\cap
L^{\infty }(\Omega )$ are nonnegative functions that satisfy the inequality $%
0<\underline{u}\leq \overline{u}$ in $\Omega $ and if for all $\varphi \in
W_{0}^{1,\Phi }(\Omega )$ with $\varphi \geq 0$ the following inequalities
hold

\begin{equation*}
\int_{\Omega}\phi(|\nabla \underline{u}|)\nabla\underline{u}%
\nabla\varphi\leq \int_{\Omega}(f(\underline{u})|\underline{u}%
|_{L^{\Psi}}^{\alpha} + g(\underline{u})| \underline{u}|^{\gamma}_{L^{%
\Lambda}})\varphi
\end{equation*}
and 
\begin{equation*}
\int_{\Omega}\phi(|\nabla \overline{u}|)\nabla\overline{u}\nabla\varphi\geq
\int_{\Omega}(f(\overline{u})|\overline{u}|_{L^{\Psi}}^{\alpha} + g(%
\overline{u}) | \overline{u}|^{\gamma}_{L^{\Lambda}})\varphi .
\end{equation*}

The following result will play an important role in our arguments.

\begin{lemma}
\label{sub-supermethod} Suppose that $f,g:[0, +\infty) \rightarrow \mathbb{R}
$ are nondecreasing, continuous and nonnegative functions. Consider also
that $\alpha, \gamma \geq 0$ and that there exists a sub-supersolution pair $%
(\underline{u},\overline{u})$ for problem $(P_1). $ Then there exists a
nontrivial solution $u$ for $(P_1)$ with $u \in [\underline{u},\overline{u}%
]. $
\end{lemma}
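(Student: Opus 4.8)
The plan is to decouple the nonlocal difficulty from the operator difficulty by a fixed-point argument. The nonlocal terms $|u|_{L^\Psi}^\alpha$ and $|u|_{L^\Lambda}^\gamma$ are the obstruction to pointwise reasoning, so I would first freeze them. Concretely, for each fixed $w \in [\underline{u},\overline{u}]$ set the constants $A(w) := |w|_{L^\Psi}^\alpha$ and $B(w) := |w|_{L^\Lambda}^\gamma$ and consider the \emph{local} auxiliary problem
\begin{equation*}
-\Delta_\Phi u = f(u)\,A(w) + g(u)\,B(w) \quad \text{in } \Omega, \qquad u = 0 \quad \text{on } \partial\Omega.
\end{equation*}
This is now a genuine pointwise (local) equation with a nondecreasing right-hand side, and I would solve it by the classical monotone-iteration / truncation method for the $\Phi$-Laplacian: truncate the nonlinearity outside $[\underline{u},\overline{u}]$ to obtain a bounded Carathéodory right-hand side, minimize the associated coercive, weakly lower semicontinuous Orlicz-Sobolev energy functional on $W_0^{1,\Phi}(\Omega)$ to get a solution $u = T(w)$, and then use the comparison principle (Lemma~\ref{comparison}) together with the sub- and supersolution inequalities to show $\underline{u} \le T(w) \le \overline{u}$, so that the truncation is inactive and $T$ maps $[\underline{u},\overline{u}]$ into itself.

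Next I would build the fixed point. The operator $T:[\underline{u},\overline{u}] \to [\underline{u},\overline{u}]$ should be shown to be compact and continuous on a suitable closed, convex, bounded subset of $W_0^{1,\Phi}(\Omega)$ (or of a Lebesgue/Orlicz space in which the order interval is bounded), so that Schauder's fixed point theorem applies. Compactness comes from the compact Orlicz-Sobolev embedding together with the regularity/boundedness estimates available for the $\Phi$-Laplacian (recall the solutions lie in $L^\infty(\Omega)$, and Lemma~\ref{Tan-Fang} with the $C^1$ regularity noted afterwards controls them); continuity follows because $w \mapsto (A(w),B(w))$ is continuous and because the solution map for the frozen problem depends continuously on these scalar parameters. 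A fixed point $u = T(u)$ then satisfies $A(u) = |u|_{L^\Psi}^\alpha$ and $B(u) = |u|_{L^\Lambda}^\gamma$ by construction, hence solves $(P_1)$; and $u \in [\underline{u},\overline{u}]$ gives $u \ge \underline{u} > 0$, so the solution is nontrivial.

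The main obstacle, I expect, is the interplay between the nonlocal freezing and the order structure, specifically verifying that $T$ is well defined and self-mapping \emph{uniformly} in $w$. The sub-supersolution inequalities in the definition are stated with the genuine nonlocal coefficients $|\underline{u}|_{L^\Psi}^\alpha$, $|\overline{u}|_{L^\Psi}^\alpha$, etc., whereas the frozen problem uses $A(w),B(w)$ for an intermediate $w$; one must exploit that $\underline{u} \le w \le \overline{u}$ forces the monotonicity $|\underline{u}|_{L^\Psi}^\alpha \le A(w) \le |\overline{u}|_{L^\Psi}^\alpha$ (and likewise for $\Lambda$), so that $\underline{u}$ remains a subsolution and $\overline{u}$ a supersolution of the frozen problem for every admissible $w$. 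This is where the nondecreasing, nonnegative hypotheses on $f,g$ and the sign conditions $\alpha,\gamma \ge 0$ are essential, and it is the step that makes Lemma~\ref{comparison} applicable with the same pair $(\underline{u},\overline{u})$ across all frozen problems. The remaining verifications -- coercivity and weak lower semicontinuity of the truncated energy via Lemma~\ref{min-max}, and the compactness of the embedding -- are standard once this uniform self-mapping property is in hand.
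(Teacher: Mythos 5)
Your overall strategy (freeze the nonlocal coefficients, solve an auxiliary problem, and close with a fixed point) is a legitimate alternative to what the paper does, and your key monotonicity observation --- that $\underline{u}\le w\le\overline{u}$ forces $|\underline{u}|_{L^{\Psi}}^{\alpha}\le A(w)\le|\overline{u}|_{L^{\Psi}}^{\alpha}$, so the same pair $(\underline{u},\overline{u})$ serves as sub- and supersolution for every frozen problem --- is exactly the point that makes any version of this argument work. However, there is a genuine gap at the Schauder step. Because you freeze only the nonlocal factors and leave $f(u)$, $g(u)$ in the equation, the auxiliary problem $-\Delta_{\Phi}u=f(u)A(w)+g(u)B(w)$ need not be uniquely solvable: the truncated energy functional is the difference of the convex term $\int_{\Omega}\Phi(|\nabla u|)$ and another convex term (the primitive of the truncated, nondecreasing, nonnegative $f$ and $g$), hence is not convex, and its minimizers need not be unique. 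Consequently $T$ is a priori multivalued, and the selection ``the minimizer'' is not known to depend continuously on $w$; continuity of $T$ is precisely what Schauder requires, and you assert it without justification. The standard repair is to freeze the \emph{entire} right-hand side, i.e.\ define $T(w)$ as the unique solution of $-\Delta_{\Phi}u=f(w)A(w)+g(w)B(w)$ (uniqueness now follows from strict monotonicity of the $\Phi$-Laplacian), after which self-mapping, continuity and compactness of $T$ on the order interval are routine.

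The paper avoids Schauder altogether: it freezes the whole right-hand side at the previous iterate and runs the monotone iteration $-\Delta_{\Phi}u_{n}=f(u_{n-1})|u_{n-1}|_{L^{\Psi}}^{\alpha}+g(u_{n-1})|u_{n-1}|_{L^{\Lambda}}^{\gamma}$ starting from $\underline{u}$, using Lemma~\ref{comparison} together with the monotonicity of $f,g$ and of $u\mapsto|u|_{L^{\Psi}}$ on nonnegative ordered functions to obtain $\underline{u}\le u_{1}\le u_{2}\le\cdots\le\overline{u}$, and then Lieberman's $C^{1,\theta}$ estimates plus the compact embedding $C^{1,\theta}(\overline{\Omega})\hookrightarrow C^{1}(\overline{\Omega})$ to pass to the limit in the iteration. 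This yields existence with an ordered approximating sequence and sidesteps any continuity-of-the-solution-map issue. With the full freezing described above, your fixed-point route would also be correct.
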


\begin{proof}
We have that 
\begin{equation*}
\begin{aligned} \left\{\begin{array}{rcl} -\Delta_{\Phi} \underline{u} &\leq
&f(\underline{u})| \underline{u}|^{\alpha}_{L^\Psi} + g(\underline{u})|
\underline{u}|^{\gamma}_{L^\Lambda}\;\;\mbox{in} \;\;\Omega,\\ \vspace{.2cm}
\underline{u}&=&0\;\;\mbox{on}\;\;\partial\Omega. \end{array} \right.
\end{aligned}
\end{equation*}
and 
\begin{equation*}
\begin{aligned} \left\{\begin{array}{rcl} -\Delta_{\Phi} \overline{u}
&\geq&f(\overline{u})| \overline{u}|^{\alpha}_{L^\Psi} +
g(\overline{u})|\overline{u} |^{\gamma}_{L^{\Lambda}}\;\;\mbox{in}
\;\;\Omega,\\ \vspace{.2cm} \overline{u}&=&0\;\;\mbox{on}\;\;\partial\Omega.
\end{array} \right. \end{aligned}
\end{equation*}

Denote by $u_1$ the unique solution of the problem 
\begin{equation*}
\begin{aligned} \left\{\begin{array}{rcl} -\Delta_{\Phi} u_1 &=&
f(\underline{u})| \underline{u}|^{\alpha}_{L^\Psi} +g(\underline{u})|
\underline{u}|^{\gamma}_{L^\Lambda}\;\;\mbox{in} \;\;\Omega,\\ \vspace{.2cm}
u_1&=&0\;\;\mbox{on}\;\;\partial\Omega. \end{array} \right. \end{aligned}
\end{equation*}
Note that the mentioned solution exist because the term $f(\underline{u})|%
\underline{u}|^{\alpha}_{L^\Psi}$ is bounded. Since $\underline{u} \leq 
\overline{u}$ in $\Omega$, $f$ is nondecreasing and $\alpha,\gamma \geq0$,
we have that $f(\underline{u})| \underline{u}|^{\alpha}_{L^\Psi}\leq f(%
\overline{u})| \overline{u}|^{\alpha}_{L^\Psi}$ and $g(\underline{u})| 
\underline{u}|^{\gamma}_{L^\Psi}\leq g(\overline{u})| \overline{u}%
|^{\gamma}_{L^\Lambda},$ then it follows from Lemma \ref{comparison} that $%
\underline{u} \leq u_1 \leq \overline{u}$ in $\Omega.$

Let $u_{2}$ be the solution of the problem 
\begin{equation*}
\begin{aligned} \left\{\begin{array}{rcl} -\Delta_{\Phi} u_2 &=&f(u_1)|
u_1|^{\alpha}_{L^\Psi} +g(u_1)| u_1|^{\gamma}_{L^\Lambda}\;\;\mbox{in}
\;\;\Omega,\\ \vspace{.2cm} u_2&=&0\;\;\mbox{on}\;\;\partial\Omega.
\end{array} \right. \end{aligned}
\end{equation*}%
Since $\underline{u}\leq u_{1}\leq \overline{u}$ in $\Omega $, we have that 
\begin{equation*}
f(\underline{u})|\underline{u}|_{L^{\Psi }}^{\alpha }+g(\underline{u})|%
\underline{u}|_{L^{\Lambda }}^{\alpha }\leq f(u_{1})|u_{1}|_{L^{\Psi
}}^{\alpha }+g(u_{1})|u_{1}|_{L^{\Lambda }}^{\gamma }\leq f(\overline{u})|%
\overline{u}|_{L^{\Psi }}^{\alpha }+g(\overline{u})|\overline{u}%
|_{L^{\Lambda }}^{\gamma }\ \text{in}\ \Omega .
\end{equation*}%
Thus from Lemma \ref{comparison} we get, 
\begin{equation*}
\underline{u}\leq u_{1}\leq u_{2}\leq \overline{u}\ \text{in}\ \Omega .
\end{equation*}%
Note also that $-\Delta _{\Phi }u_{i}\leq f(\overline{u})|\overline{u}%
|_{L^{\Psi }}^{\alpha }+g(\overline{u})|\overline{u}|_{L^{\Lambda }}^{\gamma
},i=1,2.$ Thus we can construct a sequence $u_{n}$ such that 
\begin{equation*}
\begin{aligned} \left\{\begin{array}{rcl} -\Delta_{\Phi} u_n &=&f(u_{n-1})|
u_{n-1}|^{\alpha}_{L^\Psi} + g(u_{n-1})|
u_{n-1}|^{\gamma}_{L^\Lambda}\;\;\mbox{in} \;\;\Omega,\\ \vspace{.2cm}
u_n&=&0\;\;\mbox{on}\;\;\partial\Omega. \end{array} \right. \end{aligned}%
\eqno{(P_n)}
\end{equation*}%
with $-\Delta _{\Phi }u_{n}\leq f(\overline{u})|\overline{u}|_{L^{\Psi
}}^{\alpha }+g(\overline{u})|\overline{u}|_{L^{\Lambda }}^{\gamma }$ in $%
\Omega $ and $\underline{u}\leq u_{n}\leq \overline{u}$ in $\Omega $ for all 
$n\in \mathbb{N}.$ Using the $C^{1,\alpha }$ estimates up to the boundary
(see \cite{Lieberman}), we have that $u_{n}$ is a bounded sequence in $%
C^{1,\theta }(\overline{\Omega })$ for some $\theta \in (0,1].$ Since the
embedding $C^{1,\theta }(\overline{\Omega })\hookrightarrow C^{1}(\overline{%
\Omega })$ is compact, we can extract a subsequence with $u_{n}\rightarrow u$
in $C^{1}(\overline{\Omega })$ for some $u\in C^{1}(\overline{\Omega })$.
Passing to the limit in $(P_{n})$, we have that $u$ is a nontrivial solution
for problem $(P_{1})$.
\end{proof}

\subsection{A sublinear scalar problem}

In this section we use Lemma \ref{sub-supermethod} and a suitable
sub-supersolution pair to prove the existence of solution for a nonlocal
problem of the type 
\begin{equation*}
\left\{ 
\begin{array}{rcl}
-\Delta _{\Phi }u & = & u^{\beta }|u|_{L^{\Psi }}^{\alpha }\ \mbox{in}\
\Omega , \\ 
u & = & 0\ \mbox{on}\ \partial \Omega ,%
\end{array}%
\right. \eqno{(P_S)}
\end{equation*}%
where $\alpha ,\beta \geq 0$ are constants saisfying certain conditions. The
above problem is considered in \cite{CFL} for the $p-$Laplacian case and
with $\beta =0$. We complete the study done in \cite[Theorem 4.1]{CFL} by
considering constants exponents and a more general operator.

\begin{theorem}
\label{teo-sublinear} Suppose that $\alpha, \beta \geq 0$ with $0 < \alpha +
\beta < l-1,$ where $l$ is given in $(\phi_3).$ Then $(P_S)$ has a positive
solution.
\end{theorem}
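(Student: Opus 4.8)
The plan is to view $(P_S)$ as the instance of $(P_1)$ with $f(s)=s^{\beta}$ and $g\equiv 0$, so that $f$ is continuous, nonnegative and nondecreasing on $[0,+\infty)$; by Lemma~\ref{sub-supermethod} it then suffices to produce an ordered pair $(\underline{u},\overline{u})$ with $0<\underline{u}\le\overline{u}$ satisfying the two weak differential inequalities, after which the monotone iteration of that lemma yields a positive solution trapped in $[\underline{u},\overline{u}]$.

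For the supersolution I would take $\overline{u}=z_{\overline{\lambda}}$, the torsion-type function of Lemma~\ref{Tan-Fang}, with $\overline{\lambda}\ge\rho_0$ large. Since $\overline{u}\in C^{1}(\overline{\Omega})$ is positive and $-\Delta_{\Phi}\overline{u}=\overline{\lambda}$, the supersolution inequality reduces to the pointwise bound $\overline{\lambda}\ge \overline{u}^{\beta}\,|\overline{u}|_{L^{\Psi}}^{\alpha}$. On a bounded domain an $N$-function controls the Luxemburg norm by the sup norm, $|\overline{u}|_{L^{\Psi}}\le C_{\Psi}\,|\overline{u}|_{L^{\infty}}$, so using the estimate $|z_{\overline{\lambda}}|_{L^{\infty}}\le C^{\ast}\overline{\lambda}^{1/(l-1)}$ from Lemma~\ref{Tan-Fang} the right-hand side is at most $C\,\overline{\lambda}^{(\alpha+\beta)/(l-1)}$. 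Because $0<\alpha+\beta<l-1$ forces the exponent $(\alpha+\beta)/(l-1)<1$, choosing $\overline{\lambda}$ large makes $C\,\overline{\lambda}^{(\alpha+\beta)/(l-1)}\le\overline{\lambda}$, which is exactly the supersolution inequality.

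The delicate point is the subsolution, and here the non-homogeneity of $\Delta_{\Phi}$ together with the boundary degeneracy of the factor $u^{\beta}$ is the main obstacle. The torsion function cannot be reused: for $\underline{u}=z_{\underline{\lambda}}$ one has $-\Delta_{\Phi}\underline{u}=\underline{\lambda}$, which does not vanish on $\partial\Omega$, whereas $\underline{u}^{\beta}|\underline{u}|_{L^{\Psi}}^{\alpha}\to0$ there when $\beta>0$, so no positive constant $\underline{\lambda}$ can work. Instead I would take $\underline{u}=\epsilon\,\varphi_{1}$, a small multiple of a positive function vanishing linearly at $\partial\Omega$ whose image under the operator vanishes at the boundary at the rate $\varphi_{1}^{l-1}$ (the first eigenfunction of the $l$-Laplacian is the natural choice). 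For $\epsilon$ small the gradient $|\nabla\underline{u}|=\epsilon|\nabla\varphi_{1}|$ is uniformly small, so $(\phi_3)'$ pins the behaviour of $\phi(t)t$ between the powers $t^{l-1}$ and $t^{m-1}$ and gives a pointwise bound of the form $-\Delta_{\Phi}\underline{u}\le C\,\epsilon^{l-1}\varphi_{1}^{l-1}$; against this the right-hand side behaves like $\epsilon^{\alpha+\beta}\varphi_{1}^{\beta}\,|\varphi_{1}|_{L^{\Psi}}^{\alpha}$, and since $\alpha+\beta<l-1$ we have $\varphi_{1}^{l-1}\ll\varphi_{1}^{\beta}$ near the boundary and $\epsilon^{l-1}\le\epsilon^{\alpha+\beta}$ in the interior, so the subsolution inequality holds once $\epsilon$ is small enough.

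Finally I would fix $\overline{\lambda}$ first and then shrink $\epsilon$ so that $\underline{u}=\epsilon\varphi_{1}\le\overline{u}=z_{\overline{\lambda}}$ in $\Omega$ (comparing the two $C^{1}$ functions, or invoking Lemma~\ref{comparison}); with $0<\underline{u}\le\overline{u}$ and both inequalities verified, Lemma~\ref{sub-supermethod} produces a positive solution of $(P_S)$ in $[\underline{u},\overline{u}]$. The step I expect to be hardest, and where the non-homogeneous operator really bites, is making the near-boundary estimate for $-\Delta_{\Phi}(\epsilon\varphi_{1})$ rigorous, since $\Delta_{\Phi}$ does not commute with scalar multiplication and its comparison with the $l$-Laplacian is only asymptotic as the gradient tends to zero.
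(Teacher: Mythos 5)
Your overall framework is the paper's: cast $(P_S)$ as $(P_1)$ with $f(s)=s^{\beta}$, $g\equiv 0$, build an ordered sub-supersolution pair, and invoke Lemma \ref{sub-supermethod}. Your supersolution is exactly the paper's: $\overline{u}=z_{\lambda}$ with $\lambda$ large, using the $L^{\infty}$ bound $z_{\lambda}\leq K\lambda^{1/(l-1)}$ of Lemma \ref{Tan-Fang} and the condition $\alpha+\beta<l-1$ to force $z_{\lambda}^{\beta}|z_{\lambda}|_{L^{\Psi}}^{\alpha}\leq\lambda$. That part is correct and complete.

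The subsolution is where there is a genuine gap. You take $\underline{u}=\epsilon\varphi_{1}$ with $\varphi_{1}$ the first eigenfunction of the $l$-Laplacian and assert the pointwise bound $-\Delta_{\Phi}(\epsilon\varphi_{1})\leq C\,\epsilon^{l-1}\varphi_{1}^{l-1}$, claiming it follows from $(\phi_3)'$ because $\phi(t)t$ is pinned between $t^{l-1}$ and $t^{m-1}$ for small $t$. This does not follow: $-\Delta_{\Phi}(\epsilon\varphi_{1})=-\mathrm{div}\bigl(\phi(\epsilon|\nabla\varphi_{1}|)\,\epsilon\nabla\varphi_{1}\bigr)$ involves the \emph{derivative} $(\phi(t)t)'$ contracted against the Hessian of $\varphi_{1}$, not just the values of $\phi(t)t$. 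A two-sided power bound on $\phi(t)t$ gives no control of the divergence, the eigenvalue equation $-\Delta_{l}\varphi_{1}=\lambda_{1}\varphi_{1}^{l-1}$ controls only one particular combination of second derivatives (and $\varphi_{1}$ is in general only $C^{1,\theta}$, with the operator degenerating at interior critical points), and the weak formulation does not let you compare integrands pointwise since $\nabla\psi$ has no sign. You flag this yourself as the hardest step, but it is in fact the entire content of the subsolution construction, and it is not a technicality one can wave at. The paper avoids the issue by building an explicit radial-in-$d(x)$ function $\eta$ (exponential in $d$ near $\partial\Omega$, a prescribed profile in an intermediate annulus, constant inside), for which $-\Delta_{\Phi}(\mu\eta)$ can be computed in closed form in terms of $d$, $\Delta d$ and one-dimensional derivatives of $\phi(t)t$; condition $(\phi_3)$ then shows the operator is $\leq 0$ near the boundary, identically $0$ inside, and explicitly bounded by $C\max\{(\mu k e^{k\sigma})^{m-1},(\mu k e^{k\sigma})^{l-1}\}$ in the annulus, where the choice $\sigma=\ln 2/k$, $\mu=e^{-k}$ together with $\alpha+\beta<l-1$ closes the inequality for $k$ large. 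To repair your argument you would either need to adopt such an explicit barrier or supply a nontrivial additional lemma comparing $\Delta_{\Phi}$ applied to small multiples of $\varphi_{1}$ with $\Delta_{l}\varphi_{1}$; as written, the subsolution inequality is not established.
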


\begin{proof}
We will start by constructing $\overline{u}$. Let $\lambda>0$ and consider $%
z_{\lambda}\in W_{0}^{1,\Psi}(\Omega)\cap L^{\infty}(\Omega)$ the unique
solution of \eqref{probl-linear-lambda} where $\lambda$ will be chosen later.

For $\lambda >0$ large by Lemma \ref{Tan-Fang} there is a constant $K>1$
that does not depend on $\lambda$ such that 
\begin{equation}  \label{desig1-p-supsol}
0<z_{\lambda}(x)\leq K\lambda^{\frac{1}{l-1}}\;\text{in}\;\Omega.
\end{equation}

Since $0<\alpha+\beta<l-1$ we can choose $\lambda>1$ such that %
\eqref{desig1-p-supsol} occurs and 
\begin{equation}  \label{desig2-p-supsol}
K^{\beta}\lambda^{\frac{\alpha+\beta}{l-1}} |K|^{\alpha}_{L^\Psi}\leq\lambda.
\end{equation}

By \eqref{desig1-p-supsol} and \eqref{desig2-p-supsol} we get 
\begin{equation*}
z_{\lambda}^{\beta}|z_{\lambda}|_{L^{\Psi}}^{\alpha}\leq \lambda.
\end{equation*}
Therefore 
\begin{equation*}
\begin{aligned} \left\{\begin{array}{rcl}
-\Delta_{\Phi}z_{\lambda}&\geq&z_{\lambda}^{\beta}|z_{\lambda}|_{{\Psi}}^{%
\alpha}\;\;\mbox{in}\;\;\Omega,\\ \vspace{.2cm}
z_{\lambda}&=&0\;\;\mbox{on}\;\;\partial\Omega. \end{array} \right.
\end{aligned}
\end{equation*}
Now we will construct $\underline{u}.$ Since $\partial \Omega$ is $C^2$
there is a constant $\delta >0$ such that $d \in C^{2}(\overline{ \Omega_{3
\delta}})$ and $|\nabla d(x)| \equiv 1$ where $d(x):= dist(x,\partial
\Omega) $ and $\overline{ \Omega_{3 \delta}}:=\{x \in \overline{\Omega};
d(x) \leq 3 \delta\}$(see \cite[Lemma 14.16]{Gilbarg-Trudinger} and its
proof). Let $\sigma \in (0, \delta)$. A direct computation implies that the
function $\phi=\phi(k,\sigma)$ defined by 
\begin{equation*}
\eta(x)=\left\{%
\begin{array}{lcl}
e^{kd(x)}-1 & \text{ if } & d(x)<\sigma, \\ 
e^{k\sigma}-1+\int_{\sigma}^{d(x)}ke^{k\sigma}\Big(\frac{2\delta-t}{%
2\delta-\sigma}\Big)^{\frac{m}{l-1}}dt & \text{ if } & \sigma\leq
d(x)<2\delta, \\ 
e^{k\sigma}-1+\int_{\sigma}^{2\delta}ke^{k\sigma}\Big(\frac{2\delta-t}{%
2\delta-\sigma}\Big)^{\frac{m}{l-1}}dt & \text{ if } & 2\delta \leq d(x)%
\end{array}
\right.
\end{equation*}
belongs to $C^{1}_{0}(\overline{\Omega})$ where $k>0$ is an arbitrary
number. Direct computations implies that 
\begin{equation*}
-\Delta_{\Phi}(\mu\eta)=%
\begin{cases}
- \mu k^2 e^{kd(x)} \frac{d}{dt} \left( \phi(t)t\right)\bigg\vert_{t= \mu k
e^{kd(x)}} - \phi(\mu k e^{kd(x)})\mu k e^{kd(x)}\Delta d \;\; \mbox{ if}%
\quad d(x)<\sigma, \\ 
\mu k e^{k \sigma}\left( \frac{m}{l-1}\right)\left(\frac{2\delta -d(x)}{%
2\delta- \sigma} \right)^{\frac{m}{l-1}-1}\left( \frac{1}{2\delta - \sigma}%
\right) \frac{d}{dt}\left( \phi(t)t\right)\bigg\vert_{t = \mu k e^{k \sigma}
\left( \frac{2\delta -d(x)}{2\delta - \sigma}\right)} \\ 
-\phi\left( \mu k e^{k\sigma} \left( \frac{2\delta -d(x)}{2\delta - \sigma}%
\right)^{\frac{m}{l-1}}\right) \mu k e^{k\sigma} \left( \frac{2\delta-d(x)}{%
2\delta - \sigma}\right)^{\frac{m}{l-1}}\Delta d \;\; \mbox{ if}\quad \sigma
< d(x)<2\delta, \\ 
0\;\; \mbox{ if}\quad 2\delta<d(x)%
\end{cases}%
\end{equation*}
for all $\mu >0$.

If $k$ is large and $d(x)<\sigma,$ we have that $-\Delta_\Phi(\mu \phi)\leq
0.$ In fact, note that by $(\phi_3)$ we have for $k$ large that 
\begin{equation}  \label{negative}
\begin{aligned} -\Delta_{\Phi} (\mu \eta) =& - \mu k^2 e^{kd(x)}
\frac{d}{dt}\left( \phi(t)t\right)\bigg\vert_{t= \mu k e^{kd(x)}} - \phi(\mu
k e^{kd(x)})\mu k e^{kd(x)}\Delta d \\ \leq &- k^2 \mu
e^{kd(x)}(l-1)\phi(\mu k e^{kd(x)}) - \phi(\mu k e^{kd(x)})\mu k
e^{kd(x)}\Delta d \\ =& \mu ke^{kd(x)}\phi (\mu k e^{kd(x)})(-k(l-1) -
\Delta d)\\ \leq & 0, \end{aligned}
\end{equation}
because $\Delta d$ is bounded near the boundary and $l>1.$

Now we will estimate $-\Delta_{\Phi} (\mu \eta)$ in the case $\sigma < d(x)
< 2\delta.$ Note that from $(\phi_3)$ and Lemma \ref{min-max} we get {\small 
\begin{equation}  \label{est1}
\begin{aligned} \mu & k e^{k \sigma}\left(
\frac{m}{l-1}\right)\left(\frac{2\delta -d(x)}{2\delta- \sigma}
\right)^{\frac{m}{l-1}-1}\left( \frac{1}{2\delta - \sigma}\right)
\frac{d}{dt}\left( \phi(t)t\right)\bigg\vert_{t = \mu k e^{k \sigma} \left(
\frac{2\delta -d(x)}{2\delta - \sigma}\right)} \\ \leq &\mu k e^{k
\sigma}\left( \frac{m}{l-1}\right)\left(\frac{2\delta -d(x)}{2\delta-
\sigma} \right)^{\frac{m}{l-1}-1}\left( \frac{m-1}{2\delta - \sigma}\right)
\phi \left( \mu k e^{k \sigma} \left( \frac{2\delta -d(x)}{2\delta -
\sigma}\right)^{\frac{m}{l-1}}\right)\\ \leq &
\left(\frac{m-1}{2\delta-\sigma}\right) \left(\frac{m}{l-1} \right)
\frac{\Phi \left( \mu k e^{k\sigma} \left( \frac{2\delta
-d(x)}{2\delta-\sigma}\right)^{\frac{m}{l-1}}\right)}{\mu k e^{k\sigma}
\left( \frac{2\delta -d(x)}{2\delta -\sigma}\right)^{\frac{m}{l-1}}}
\frac{1}{\left(\frac{2\delta -d(x)}{2\delta - \sigma}\right)}\\ \leq & \max
\left\{ (\mu k e^{k\sigma})^{m-1} \left( \frac{2\delta - d(x)}{2\delta
-\sigma}\right)^{m \left( \frac{m}{l-1}\right) - \left(\frac{m}{l-1} +1
\right)}, (\mu k e^{k\sigma})^{l-1} \left( \frac{2\delta - d(x)}{2\delta
-\sigma}\right)^{l \left( \frac{m}{l-1}\right) - \left(\frac{m}{l-1} +1
\right)}\right\}\\ \times & \left( \frac{m-1}{2\delta -\sigma}\right) \left(
\frac{m}{l-1}\right). \end{aligned}
\end{equation}
} Since $m,l >1$, we get $l \left(\frac{m}{l-1} \right) - m \left(\frac{m}{%
l-1} +1\right), m \left(\frac{m}{l-1} \right) - m \left(\frac{m}{l-1}
+1\right)> 0. $ Note that $0 \leq \left( \frac{2\delta - d(x)}{2\delta -
\sigma}\right) \leq 1.$ Thus by \eqref{est1} we get 
\begin{equation}  \label{est2}
\begin{aligned} \mu & k e^{k \sigma}\left(
\frac{m}{l-1}\right)\left(\frac{2\delta -d(x)}{2\delta- \sigma}
\right)^{\frac{m}{l-1}-1}\left( \frac{1}{2\delta - \sigma}\right)
\frac{d}{dt}\left( \phi(t)t\right)\bigg\vert_{t = \mu k e^{k \sigma} \left(
\frac{2\delta -d(x)}{2\delta - \sigma}\right)}\\ \leq & \left(
\frac{m-1}{2\delta-\sigma}\right) \left( \frac{m}{l-1}\right) \max \{(\mu k
e^{k\sigma})^{m-1}, (\mu k e^{k\sigma})^{l-1}\} \\ =& C_1 \left(
\frac{1}{2\delta -\sigma}\right) \max \{(\mu k e^{k\sigma})^{m-1}, (\mu k
e^{k\sigma})^{l-1}\}, \end{aligned}
\end{equation}
where $C_1$ is a constant that does not depend on $\mu $ and $k.$ On other
hand, we have by Lemma \ref{min-max} that {\small 
\begin{equation}  \label{est3}
\begin{aligned} &\left|\phi \left(\mu k e^{k\sigma} \left( \frac{2\delta
-d(x)}{2\delta - \sigma}\right)^{\frac{m}{l-1}} \right) \mu k e^{k\sigma}
\left( \frac{2\delta - d(x)}{2\delta - \sigma}\right)^{\frac{m}{l-1}} \Delta
d\right|\\ \leq & \phi \left(\mu k e^{k\sigma} \left( \frac{2\delta
-d(x)}{2\delta - \sigma}\right)^{\frac{m}{l-1}} \right) \mu k e^{k\sigma}
\left( \frac{2\delta - d(x)}{2\delta - \sigma}\right)^{\frac{m}{l-1}}
\displaystyle\sup_{\overline{\Omega_{3\delta}} } |\Delta d| \\ \leq & C
\frac{\Phi \left( \mu k e^{k\sigma} \left(\frac{2\delta - d(x)}{2\delta -
\sigma} \right)^{\frac{m}{l-1}}\right)}{\mu k e^{k\sigma} \left(
\frac{2\delta - d(x)}{2\delta - \sigma}\right)^{\frac{m}{l-1}}}\\ \leq & C
\max \left\{ (\mu k e^{k\sigma})^{m-1} \left( \frac{2\delta - d(x)}{2\delta
-\sigma}\right)^{m \left( \frac{m}{l-1}\right) - \left(\frac{m}{l-1} +1
\right)}, (\mu k e^{k\sigma})^{l-1} \left( \frac{2\delta - d(x)}{2\delta
-\sigma}\right)^{l \left( \frac{m}{l-1}\right) - \left(\frac{m}{l-1} +1
\right)}\right\}\\ \leq & C_2 \max\{(\mu k e^{k\sigma})^{m-1}, (\mu k
e^{k\sigma})^{l-1}\}, \end{aligned}
\end{equation}
} where $C_2$ is a constant that does not depend on $\sigma, k$ and $\mu.$
Thus from \eqref{est2} and \eqref{est3} we have that 
\begin{equation}  \label{midle}
- \Delta_{\Phi} u \leq \max\left\{ \frac{C_1 }{2\delta-\sigma}, C_2 \right\}
\max\{(\mu k e^{k\sigma})^{m-1}, (\mu k e^{k\sigma})^{l-1}\},
\end{equation}
if $\sigma < d(x) < 2\delta.$

Consider the function $\eta$ and the numbers $\mu, \sigma$ and $k>0$
described before. Let $\sigma = \frac{\ln 2}{k}$ and $\mu = e^{-k}.$ Then $%
e^{k\sigma} =2 $.

If $k>0$ is large, we have from \eqref{negative} that 
\begin{equation}  \label{f1}
-\Delta_{\Phi}(\mu \eta) \leq 0 \leq (\mu \eta)^{\beta}| \mu
\eta|^{\alpha}_{L^\Psi}
\end{equation}
in the case $d(x) < \sigma.$

For any $k>0$ we have $\eta (x)\geq e^{k\sigma }-1=2-1=1$ in $\Omega .$ Thus
there is a constant $C_{3}>0$ that does not depend on $k>0$ such that 
\begin{equation*}
(\mu \eta )^{\beta }|\mu \eta |_{L^{\Psi }}^{\alpha }\geq \mu ^{\alpha
+\beta }C_{3}
\end{equation*}%
Since $0<\alpha +\beta <l-1$, the L'Hospital's rule implies that 
\begin{equation*}
\lim_{k\rightarrow +\infty }\frac{k^{l-1}}{e^{k(l-1-(\alpha +\beta ))}}=0.
\end{equation*}%
Thus, it is possible to consider a large $k_{0}>0$ such that 
\begin{equation*}
C_{3}\geq \max \left\{ C_{1}\frac{1}{2\delta -\frac{\ln 2}{k}},C_{2}\right\}
\max \{2^{m-1},2^{l-1}\}\frac{k^{l-1}}{e^{k(l-1-(\alpha +\beta ))}},
\end{equation*}%
for all $k\geq k_{0}.$ From $\eqref{midle},$ we have that 
\begin{equation}
-\Delta _{\Phi }(\mu \eta )\leq (\mu \eta )^{\beta }|\mu \eta |_{L^{\Psi
}}^{\alpha }  \label{f2}
\end{equation}%
in the region $\sigma <d(x)<2\delta $ for $k>0$ is large enough.

If $d(x)> 2\delta$ we have 
\begin{equation}  \label{f3}
- \Delta_{\Phi}(\mu \eta) = 0 \leq (\mu \eta)^{\beta} |\mu \eta
|^{\alpha}_{L^\Psi}.
\end{equation}
Thus from \eqref{f1}, \eqref{f2} and \eqref{f3} we have that $\mu \eta $ is
a subsolution for $(Ps).$ Note that from $\eqref{midle}, \eqref{f1}$ and $%
\eqref{f3}$ we have for $k,\lambda>0 $ large enough that $-\Delta_{\Phi}
(\mu \eta) \leq -\Delta_{\Phi} z_{\lambda}$. Thus from Lemma \ref{Tan-Fang}
we have $\mu \eta \leq z_{\lambda}$ in $\Omega.$ From Lemma \ref%
{sub-supermethod} we have the result.
\end{proof}

\begin{remark}
An interesting question for problem $(P_S)$ is the existence of solution in
the case $l-1 < \alpha + \beta.$
\end{remark}

\subsection{A concave-convex scalar problem:}

In this section we will consider a concave-convex problem of the type

% problema (P)_{\lambda,\mu}
\begin{equation*}
\left\{ 
\begin{array}{ll}
-\Delta _{\Phi }u=\lambda u^{\beta }|u|_{L^{\Psi }}^{\alpha }+\theta u^{\xi
}|u|_{L^{\Lambda }}^{\gamma } & \mbox{in}\;\;\Omega , \\ 
u=0 & \mbox{on}\;\;\partial \Omega ,%
\end{array}%
\right. \eqno{(P)_{\lambda,\theta}}
\end{equation*}%
where $\alpha ,\beta ,\xi ,\gamma \geq 0$ are constants satisfying certain
conditions and $\lambda ,\theta >0$ are positive numbers. The local version
of $(P)_{\lambda ,\theta }$ for the Laplacian operator was considered in the
famous paper by Ambrosetti-Brezis-Cerami \cite{ABC} in which a
sub-supersolution argument is used. Our result is the following one.

%The local version of $(P)_{\lambda,\theta}$ with $p(x)\equiv2$  and constant exponents was considered in the famous paper by Ambrosetti-Brezis-Cerami \cite{abc} by using a sub-super argument. In \cite{gel-giovany} the problem $(P)_{\lambda,\theta}$ was studied with $p(x)\equiv2.$ The following result generalizes \cite[Theorem 4]{gel-giovany}.

\begin{theorem}
Suppose that $\alpha,\beta,\xi,\gamma \geq 0$ and consider also that $0 <
\alpha +\beta < l-1 $. The following assertions hold.

\vspace{0.2cm}

\noindent\textbf{$(i)$} If $m-1<\xi+\gamma,$ then given $\theta>0$ there
exists $\lambda_{0}>0$ such that for each $\lambda\in(0,\lambda_{0})$ the
problem $(P)_{\lambda,\theta}$ has a positive solution $u_{\lambda,\theta}.$

\vspace{0.2cm}

\noindent\textbf{$(ii)$} If $l-1<\xi+\gamma$, then given $\lambda>0$ there
exists $\theta_{0}>0$ such that for each $\theta\in(0,\theta_{0})$ the
problem $(P)_{\lambda,\theta}$ has a positive solution $u_{\lambda,\theta}.$
\end{theorem}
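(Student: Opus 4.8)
The plan is to recognize $(P)_{\lambda,\theta}$ as the scalar problem $(P_1)$ with the nonlinearities $f(t)=\lambda t^{\beta}$ and $g(t)=\theta t^{\xi}$, which are continuous, nonnegative and nondecreasing on $[0,+\infty)$; hence, once an ordered pair $(\underline u,\overline u)$ with $0<\underline u\le\overline u$ satisfying the two weak differential inequalities is exhibited, Lemma \ref{sub-supermethod} delivers a solution $u\in[\underline u,\overline u]$, which is positive because $\underline u>0$. The whole argument therefore reduces to building such a pair in each of the regimes $(i)$ and $(ii)$. For the subsolution I would reuse the function $\underline u=\mu\eta$ constructed in the proof of Theorem \ref{teo-sublinear}, with $\sigma=\tfrac{\ln 2}{k}$ and $\mu=e^{-k}$. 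Since the right-hand side of $(P)_{\lambda,\theta}$ dominates $\lambda\,\underline u^{\beta}|\underline u|_{L^{\Psi}}^{\alpha}$ and $\eta\ge 1$ in $\Omega$, the estimates \eqref{negative} and \eqref{midle} give the subsolution inequality for every fixed $\lambda>0$ as soon as $k$ is large; the mechanism is unchanged, the hypothesis $0<\alpha+\beta<l-1$ forcing $k^{\,l-1}e^{-k(l-1-(\alpha+\beta))}\to 0$.

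For the supersolution I would take $\overline u=z_{R}$, the solution of \eqref{probl-linear-lambda} with constant right-hand side $R>0$, so that $-\Delta_{\Phi}z_{R}=R$. Because $f,g$ are nondecreasing and $z_{R}\le|z_{R}|_{L^{\infty}}$ while, on the bounded domain $\Omega$, the Orlicz norms are controlled by the $L^{\infty}$ norm, it suffices to pick $R$ so that
\begin{equation*}
R\ \ge\ \lambda\, C\, |z_{R}|_{L^{\infty}}^{\alpha+\beta}+\theta\, C'\, |z_{R}|_{L^{\infty}}^{\xi+\gamma}.
\end{equation*}
Inserting the sharp bounds of Lemma \ref{Tan-Fang}, namely $|z_{R}|_{L^{\infty}}\le C_{\ast}R^{1/(m-1)}$ for $R<\rho_{0}$ and $|z_{R}|_{L^{\infty}}\le C^{\ast}R^{1/(l-1)}$ for $R\ge\rho_{0}$, this becomes a scalar inequality of the form $\lambda\,\widetilde C\,R^{(\alpha+\beta)/s}+\theta\,\widetilde C'\,R^{(\xi+\gamma)/s}\le R$, with $s=m-1$ or $s=l-1$ according to the side of $\rho_{0}$ on which $R$ is taken.

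In case $(i)$ I fix $\theta$ and work on the small side, $R<\rho_{0}$ (so $s=m-1$). As $\xi+\gamma>m-1$, the exponent $\tfrac{\xi+\gamma}{m-1}-1$ is positive, so the $\theta$-term is $\le R/2$ for all sufficiently small $R$; fixing one such $R$, the $\lambda$-term then equals a constant times $\lambda$, hence is $\le R/2$ once $\lambda$ is small, and this threshold defines $\lambda_{0}$ (note that $\tfrac{\alpha+\beta}{m-1}-1<0$, which is why the $\lambda$-term must be controlled by shrinking $\lambda$ rather than by shrinking $R$). In case $(ii)$ I fix $\lambda$ and work on the large side, $R\ge\rho_{0}$ (so $s=l-1$). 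As $\alpha+\beta<l-1$, the exponent $\tfrac{\alpha+\beta}{l-1}-1$ is negative, so the $\lambda$-term is $\le R/2$ for all sufficiently large $R$; fixing one such $R$ and then shrinking $\theta$ makes the $\theta$-term $\le R/2$, defining $\theta_{0}$ (the hypothesis $\xi+\gamma>l-1$ being precisely what marks $\theta$ as the critical parameter for which smallness is required). In either regime $z_{R}$ is thus a supersolution.

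It remains to secure the ordering $\underline u\le\overline u$. From \eqref{negative}, \eqref{midle} and the vanishing of $-\Delta_{\Phi}(\mu\eta)$ for $d(x)>2\delta$, one has $-\Delta_{\Phi}(\mu\eta)\le \max\{\tfrac{C_{1}}{2\delta-\sigma},C_{2}\}\,\max\{(2ke^{-k})^{m-1},(2ke^{-k})^{l-1}\}\to 0$ as $k\to\infty$; hence, after possibly enlarging $k$, $-\Delta_{\Phi}(\mu\eta)\le R=-\Delta_{\Phi}z_{R}$, and Lemma \ref{comparison} yields $\mu\eta\le z_{R}$ in $\Omega$. With the ordered pair $(\mu\eta,z_{R})$ in hand, Lemma \ref{sub-supermethod} produces the desired positive solution $u_{\lambda,\theta}$. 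The step I expect to be the crux is the supersolution estimate: one must place $R$ on the correct side of $\rho_{0}$ so that the applicable bound of Lemma \ref{Tan-Fang} carries the exponent ($m-1$ in $(i)$, $l-1$ in $(ii)$) that renders the convex $\theta$-term subordinate, while keeping $R$ small or large enough to preserve both the comparison $\mu\eta\le z_{R}$ and the sublinear control of the $\lambda$-term.
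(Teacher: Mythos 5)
Your proposal is correct and follows the same skeleton as the paper's proof: the subsolution is the same $\mu\eta$ from Theorem \ref{teo-sublinear}, the supersolution is the torsion-type function with constant right-hand side solving \eqref{probl-linear-lambda}, Lemma \ref{Tan-Fang} supplies the $L^{\infty}$ bound with exponent $\frac{1}{m-1}$ on the small side of $\rho_{0}$ and $\frac{1}{l-1}$ on the large side, and Lemmas \ref{comparison} and \ref{sub-supermethod} close the argument. The only genuine divergence is in how the scalar inequality $R\geq \lambda\widetilde{C}R^{(\alpha+\beta)/s}+\theta\widetilde{C}'R^{(\xi+\gamma)/s}$ is resolved. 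In case $(i)$ the paper couples the constant to the parameter, taking $R=\lambda$ and verifying $\lambda\geq \overline{K}\lambda^{\frac{\alpha+\beta+m-1}{m-1}}+\theta\overline{K}\lambda^{\frac{\xi+\gamma}{m-1}}$ for $\lambda$ small; you decouple them, first fixing $R$ small (which controls the $\theta$-term via $\xi+\gamma>m-1$) and then shrinking $\lambda$ --- both work. In case $(ii)$ the paper minimizes $\Psi(t)=\lambda\overline{K}t^{\rho-1}+\theta\overline{K}t^{\tau-1}$ explicitly at $M_{\lambda,\theta}=L(\lambda/\theta)^{1/(\tau-\rho)}$ and checks that the minimum value is at most $1$ for $\theta$ small; your ``fix $R$ large so the $\lambda$-term is at most $R/2$, then shrink $\theta$'' is more elementary and, as a bonus, never invokes the hypothesis $l-1<\xi+\gamma$, so your route proves part $(ii)$ without that assumption. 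The remaining ingredients (monotonicity of the Luxemburg norm under pointwise ordering, the bound $|u|_{L^{\Psi}}\leq |u|_{L^{\infty}}\,|1|_{L^{\Psi}}$ on the bounded domain, and the dependence of $k$, hence of $\mu$, on $\lambda$ in the subsolution) match what the paper does implicitly.
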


\begin{proof}
Suppose that $(i)$ occurs and fix $\theta >0.$ Let $z_{\lambda }\in
W_{0}^{1,\Phi }({\Omega })\cap L^{\infty }(\Omega )$ be the unique solution
of \eqref{probl-linear-lambda} where $\lambda \in (0,1)$ will be chosen
before.

Lemma \ref{Tan-Fang} implies that for $\lambda>0$ small enough there exists
a constant $K>1$ that does not depend on $\lambda$ such that 
\begin{equation}  \label{desig1-p-supsol-concavo}
0<z_{\lambda}(x)\leq K\lambda^{\frac{1}{m-1}}\;\text{in}\;\Omega.
\end{equation}

Let $\overline{K}:=\max \big\{K^{\beta }|K|_{L^{\Psi }}^{\alpha },K^{\xi
}|K|_{L^{\Lambda }}^{\gamma }\}.$ For each $\theta >0$ we can choose $%
0<\lambda _{0}<1$ small enough, depending on $\theta ,$ such that the
inequalities 
\begin{equation*}
\lambda \geq \left( \lambda ^{\frac{\alpha +\beta +m-1}{m-1}}\overline{K}%
+\theta \overline{K}\lambda ^{\frac{\xi +\gamma }{m-1}}\right) ,\ \text{for
all}\ \lambda \in (0,\lambda _{0})
\end{equation*}%
and \eqref{desig1-p-supsol-concavo} hold because $\alpha +\beta >0$ and $%
m-1<\xi +\gamma .$ Thus, there is a small $\lambda _{0}>0$ such that 
\begin{align*}
(\lambda z_{\lambda }^{\beta }|z_{\lambda }|_{L^{\Psi }}^{\alpha }+\theta
z_{\lambda }^{\xi }|z_{\lambda }|_{L^{\Lambda }}^{\gamma })& \leq \lambda
(K\lambda ^{\frac{1}{m-1}})^{\beta }|K\lambda ^{\frac{1}{m-1}}|_{L^{\Psi
}}^{\alpha } \\
& +\theta (K\lambda ^{\frac{1}{m-1}})^{\xi }|K\lambda ^{\frac{1}{m-1}%
}|_{L^{\Lambda }}^{\gamma } \\
& \leq \lambda .
\end{align*}%
for all $\lambda \in (0,\lambda _{0}).$ Thus for $\lambda \in (0,\lambda
_{0})$ we get%
\begin{equation*}
\lambda z_{\lambda }^{\beta }|z_{\lambda }|_{L^{\Psi }}^{\alpha }+\theta
z_{\lambda }^{\xi }|z_{\lambda }|_{L^{\Lambda }}^{\gamma }\leq \lambda .
\end{equation*}

Now consider $\eta, \delta,\sigma,\mu$ and as in the proof of Theorem \ref%
{teo-sublinear}. Fix $\lambda\in(0,\lambda_{0})$.

Since $\alpha+\beta<l-1$ the arguments of the proof of Theorem \ref%
{teo-sublinear} implies that if $\mu=\mu(\lambda)>0$ is small enough then 
\begin{equation*}
-\Delta_{\Phi}(\mu \eta)\leq\lambda \ \text{in} \ \Omega
\end{equation*}
and 
\begin{align*}
-\Delta_{\Phi}(\mu\eta)&\leq
\lambda(\mu\eta)^{\beta}|\mu\eta|_{L^{\Psi}}^{\alpha} \\
&\leq \lambda(\mu\eta)^{\beta}|\mu\eta|_{L^{\Psi}}^{\alpha} +
\lambda(\mu\eta)^{\xi}|\mu\eta|_{L^{\Lambda}}^{\gamma}.
\end{align*}
The weak comparison principle implies that $\mu \eta \leq z_{\lambda}$ for $%
\mu = \mu(\lambda) >0$ small enough. Therefore $(\mu \eta, z_{\lambda})$ is
a sub-super solution pair for $(P)_{\lambda, \theta}.$

Now we will prove the theorem in the second case. Consider again $\eta
,\delta ,\sigma $ and $\mu $ as in the proof of Theorem \ref{teo-sublinear}.
Let $\lambda \in (0,\infty )$. Since $\alpha +\beta <l-1$ we can repeat the
arguments of Theorem \ref{teo-sublinear} to obtain $\mu =\mu (\lambda )>0$
small depending only on $\lambda $ such that%
\begin{equation*}
-\Delta _{\Phi }(\mu \eta )\leq 1\;\;\text{and}\;\;-\Delta _{\Phi }(\mu \eta
)\leq \lambda (\mu \eta )^{\beta }|\mu \eta |_{L^{\Psi }}^{\alpha }\;\;\text{%
in}\;\Omega .
\end{equation*}

Let $z_{M}\in W_{0}^{1,\Phi}(\Omega)\cap L^{\infty}(\Omega)$ the unique
solution of $\eqref{probl-linear-lambda}$ where $M>0$ will be chosen later.

For $M\geq 1$ large enough there is a constant $K>1$ that does not depend on 
$M$ such that 
\begin{equation}  \label{desig2-p-supsol-concavo}
0<z_{M}(x)\leq KM^{\frac{1}{l-1}}\;\text{in}\;\Omega.
\end{equation}

We want to obtain $M>1$ such that 
\begin{equation}  \label{eqnova}
M\geq\left(\lambda z_{M}^{\beta}|z_{M}|_{L^{\Psi}}^{\alpha}+\theta
z_{M}^{\xi}|z_{M}|_{L^{\Lambda}}^{\gamma}\right)\;\mbox{in}\;\Omega
\end{equation}
occurs.

Denoting by $I$ the right-hand side of \eqref{eqnova}, we have from %
\eqref{desig2-p-supsol-concavo} that $I\leq M$ if%
\begin{equation}
1\geq \lambda \overline{K}M^{\frac{\alpha +\beta }{l-1}-1}+\theta \overline{K%
}M^{\frac{\xi +\gamma }{l-1}-1},  \label{equiv-rel2}
\end{equation}%
where $\overline{K}:=\max \{K^{\beta }|K|_{L^{\Psi }}^{\alpha },K^{\xi
}|K|_{L^{\Psi }}^{\gamma }\}.$ Since $0<\alpha +\beta <l-1<\xi +\gamma ,$
the function 
\begin{equation*}
\Psi (t)=\lambda \overline{K}t^{\rho -1}+\theta \overline{K}t^{\tau -1},t>0,
\end{equation*}%
where $\rho :=\frac{\alpha +\beta }{l-1}$ and $\tau :=\frac{\xi +\gamma }{l-1%
},$ belongs to $C^{1}\big((0,\infty ),\mathbb{R}\big)$ and attains a global
minimum at%
\begin{equation}
M_{\lambda ,\theta }:=M(\lambda ,\theta )=L\Biggl(\dfrac{\lambda }{\theta }%
\Biggl)^{\frac{1}{\tau -\rho }}  \label{minimum}
\end{equation}%
where $L:=(\frac{1-\rho }{\tau -1})^{\frac{1}{\tau -\rho }}.$ The inequality %
\eqref{equiv-rel2} is equivalent to find $M_{\lambda ,\theta }>0$ such that $%
\Psi (M_{\lambda ,\theta })\leq 1.$ By \eqref{minimum} we have $\Psi
(M_{\lambda ,\theta })\leq 1$ if and only if 
\begin{equation*}
\lambda \overline{K}(1-\rho )^{\frac{\rho -1}{\tau -\rho }}\left( \frac{%
\lambda }{\theta }\right) ^{\frac{\rho -1}{\tau -\rho }}+\theta \overline{K}%
(1-\rho )^{\frac{\tau -1}{\tau -\rho }}\left( \frac{\lambda }{\theta }%
\right) ^{\frac{\tau -1}{\tau -\rho }}\leq 1
\end{equation*}%
Notice that the above inequality holds if $\theta >0$ is small enough
because $\alpha +\beta <l-1<\xi +\gamma $. Thus for $\lambda >0$ fixed there
exists $\theta _{0}=\theta _{0}(\lambda )$ such that for each $\theta \in
(0,\theta _{0})$ there is a number $M=M_{\lambda ,\theta }>0$ such that %
\eqref{equiv-rel2} occurs. Consequently we have \eqref{eqnova}. Therefore 
\begin{equation*}
-\Delta _{\Phi }z_{M}\geq \lambda z_{M}^{\beta }|z_{M}|_{L^{\Psi }}^{\alpha
}+\theta z_{M}^{\xi }|z_{M}|_{L^{\Lambda }}^{\gamma }\ \text{in}\ \Omega .
\end{equation*}%
Considering if necessary a smaller $\theta _{0}>0,$ we get $M\geq 1$ .
Therefore $-\Delta _{\Phi }(\mu \eta )\leq -\Delta _{\Phi }z_{M}$ in $\Omega
.$ The weak comparison principle implies that $\mu \eta \leq z_{M}$. Then $%
(\mu \eta ,z_{M})$ is a sub-supersolution pair for $(P)_{\lambda ,\theta }.$
The proof is finished.
\end{proof}

\section{The system case}

We say that $(u_{1},u_{2})\in (W_{0}^{1,\Phi _{1}}(\Omega )\cap L^{\infty
}(\Omega ))\times (W_{0}^{1,\Phi _{2}}(\Omega )\cap L^{\infty }(\Omega ))$
is a (weak) solution of $(P_{2})$ if 
\begin{equation*}
\int_{\Omega }\phi (|\nabla u_{i}|)\nabla u_{i}\nabla \varphi =\int_{\Omega
}(f_{i}(u_{j})|u_{j}|_{L^{\Psi _{i}}}^{\alpha
_{i}}+g_{i}(u_{j})|u_{j}|_{L^{\Lambda _{i}}}^{\alpha _{i}})\varphi _{i},
\end{equation*}%
for all $\varphi _{i}\in W_{0}^{1,\Phi _{i}}(\Omega )$ with $i,j=1,2$ and $%
i\neq j.$

We say that the pairs $(\underline{u}_{i},\overline{u}_{i}),i=1,2$ are
sub-supersolution pairs for $(P_{2})$ if $\underline{u}_{i},\overline{u}%
_{i}\in W_{0}^{1,\Phi _{i}}(\Omega )\cap L^{\infty }(\Omega )$ are
nonnegative functions with $0<\underline{u}_{i}\leq \overline{u}_{i}$ in $%
\Omega $ and if for all $\varphi _{i}\in W_{0}^{1,\Phi _{i}}(\Omega )$ with $%
\varphi _{i}\geq 0$ the following inequalities are verified 
\begin{equation}
\left\{ 
\begin{array}{r}
\displaystyle\int_{\Omega }\phi _{i}(|\nabla \underline{u}_{i}|)\nabla 
\underline{u}_{i}\nabla \varphi _{i}\leq \displaystyle\int_{\Omega }\left(
f_{i}(\underline{u}_{j})|\underline{u}_{j}|_{L^{\Psi _{i}}}^{\alpha
_{i}}+g_{i}(\underline{u}_{j})|\underline{u}_{j}|_{L^{\Lambda _{i}}}^{\gamma
_{i}}\right) \varphi _{i}, \\ 
\displaystyle\int_{\Omega }\phi _{i}(|\nabla \overline{u}_{i}|)\nabla 
\overline{u}_{i}\nabla \varphi _{i}\geq \displaystyle\int_{\Omega }\left(
f_{i}(\overline{u}_{j})|\overline{u}_{j}|_{L^{\Psi _{i}}}^{\alpha
_{i}}+g_{i}(\overline{u}_{j})|\overline{u}_{j}|_{L^{\Lambda _{i}}}^{\gamma
_{i}}\right) \varphi _{i},%
\end{array}%
\right.  \label{eq2.1}
\end{equation}%
for all $\varphi _{i}\in W_{0}^{1,\Phi _{i}}(\Omega )$ with $i,j=1,2$ and $%
i\neq j.$

The following lemma is needed to obtain a solution for system $(P_2).$

\begin{lemma}
\label{sub-supermethod-sys} Suppose that $f_i,g_i:[0, +\infty) \rightarrow 
\mathbb{R}, i=1,2$ are nondecreasing, continuous and nonnegative functions.
Consider also that $\alpha_i, \gamma_i \geq 0, i =1,2$ and that there exist
sub-supersolution pairs $(\underline{u}_i,\overline{u}_i), i=1,2$ for $%
(P_2). $ Then there exists a solution $(u,\widetilde{u})$ for $(P_2)$ with $%
u \in [\underline{u}_1,\overline{u}_1]$ and $\widetilde{u} \in [\underline{u}%
_2,\overline{u}_2].$
\end{lemma}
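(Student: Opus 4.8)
The plan is to adapt the iterative scheme from the proof of Lemma~\ref{sub-supermethod} to the coupled setting, treating the two equations in $(P_2)$ via an alternating (Gauss--Seidel type) iteration. First I would start the iteration at the subsolutions: set $u_1^{(0)}=\underline{u}_1$ and $\widetilde{u}^{(0)}=\underline{u}_2$. Given $u^{(n-1)}\in[\underline{u}_1,\overline{u}_1]$ and $\widetilde{u}^{(n-1)}\in[\underline{u}_2,\overline{u}_2]$, I would define $\widetilde{u}^{(n)}$ as the unique solution of
\begin{equation*}
-\Delta_{\Phi_1}\widetilde{u}^{(n)}=f_1(\widetilde{u}^{(n-1)})|\widetilde{u}^{(n-1)}|_{L^{\Psi_1}}^{\alpha_1}+g_1(\widetilde{u}^{(n-1)})|\widetilde{u}^{(n-1)}|_{L^{\Lambda_1}}^{\gamma_1}\ \text{in}\ \Omega,\quad \widetilde{u}^{(n)}=0\ \text{on}\ \partial\Omega,
\end{equation*}
and then $u^{(n)}$ as the unique solution of the analogous problem with the data $f_2(u^{(n-1)})|u^{(n-1)}|_{L^{\Psi_2}}^{\alpha_2}+g_2(u^{(n-1)})|u^{(n-1)}|_{L^{\Lambda_2}}^{\gamma_2}$ and operator $\Delta_{\Phi_2}$. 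Each right-hand side is a bounded constant in $\Omega$ (the nonlocal norms are fixed numbers once the previous iterate is known), so existence and uniqueness of each $\widetilde{u}^{(n)},u^{(n)}$ follow exactly as in Lemma~\ref{sub-supermethod}.

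Next I would establish monotonicity and enclosure by induction, using the comparison principle of Lemma~\ref{comparison} separately for $\Phi_1$ and $\Phi_2$. Because $f_i,g_i$ are nondecreasing and $\alpha_i,\gamma_i\geq0$, whenever $\underline{u}_j\leq u^{(n-1)}_j\leq\overline{u}_j$ the source term for the $i$-th equation is squeezed between the source terms generated by $\underline{u}_j$ and $\overline{u}_j$; comparing against the defining inequalities \eqref{eq2.1} for the sub- and supersolutions then yields $\underline{u}_i\leq u^{(n)}_i\leq\overline{u}_i$. The same manipulation, applied to consecutive iterates, gives that both sequences are monotone nondecreasing and trapped in their respective order intervals $[\underline{u}_1,\overline{u}_1]$ and $[\underline{u}_2,\overline{u}_2]$, hence uniformly bounded in $L^\infty(\Omega)$.

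Finally, since each right-hand side is uniformly bounded, the $C^{1,\theta}$ regularity estimates up to the boundary of Lieberman \cite{Lieberman} give that $\{u^{(n)}\}$ and $\{\widetilde{u}^{(n)}\}$ are bounded in $C^{1,\theta}(\overline{\Omega})$ for some $\theta\in(0,1]$; the compact embedding $C^{1,\theta}(\overline{\Omega})\hookrightarrow C^1(\overline{\Omega})$ lets me extract subsequences converging in $C^1(\overline{\Omega})$ to limits $u,\widetilde{u}$. Passing to the limit in the two equations, and noting that the nonlocal norms $|\cdot|_{L^{\Psi_i}}$ and $|\cdot|_{L^{\Lambda_i}}$ are continuous under the convergence (guaranteed by the $\Delta_2$-condition on $\Psi_i,\Lambda_i$, which makes modular and norm convergence equivalent), produces a solution $(u,\widetilde{u})$ of $(P_2)$ with $u\in[\underline{u}_1,\overline{u}_1]$ and $\widetilde{u}\in[\underline{u}_2,\overline{u}_2]$.

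The main obstacle I anticipate is the passage to the limit in the coupled nonlocal terms: unlike the scalar case, one must verify that $|\widetilde{u}^{(n-1)}|_{L^{\Psi_1}}\to|\widetilde{u}|_{L^{\Psi_1}}$ and similarly for the other norms, so that the limit $(u,\widetilde{u})$ genuinely satisfies the coupled system rather than a decoupled limiting equation. This is exactly where the $\Delta_2$-condition on $\Psi_i$ and $\Lambda_i$ is essential, since it guarantees that $C^1$-convergence (in particular $L^{\Psi_i}$-convergence of the iterates) transfers to convergence of the Luxemburg norms appearing as coefficients. A secondary point to handle carefully is the bookkeeping of the alternating indices $i,j$ with $i\neq j$, ensuring the enclosure argument is applied to the correct component at each step.
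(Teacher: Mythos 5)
Your overall strategy---an alternating monotone iteration started at the subsolutions, enclosure via the comparison principle of Lemma~\ref{comparison} together with the monotonicity of $f_i,g_i$, uniform $L^\infty$ bounds, the $C^{1,\theta}$ estimates of \cite{Lieberman}, compactness in $C^{1}(\overline{\Omega})$, and passage to the limit---is exactly the paper's, which simply concludes by ``arguing as in Lemma~\ref{sub-supermethod}.'' Your closing worry about the convergence of the Luxemburg norms is easily dispatched: $C^{1}(\overline{\Omega})$ convergence gives uniform convergence on the bounded set $\Omega$, hence convergence in every $L^{\Psi_i}(\Omega)$ and $L^{\Lambda_i}(\Omega)$ and therefore of the norms appearing as coefficients.

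There is, however, one concrete defect: the recursion you actually display is miswired. You define $\widetilde{u}^{(n)}$ from the data $f_1(\widetilde{u}^{(n-1)})|\widetilde{u}^{(n-1)}|_{L^{\Psi_1}}^{\alpha_1}+g_1(\widetilde{u}^{(n-1)})|\widetilde{u}^{(n-1)}|_{L^{\Lambda_1}}^{\gamma_1}$ and $u^{(n)}$ from $f_2(u^{(n-1)})|u^{(n-1)}|_{L^{\Psi_2}}^{\alpha_2}+g_2(u^{(n-1)})|u^{(n-1)}|_{L^{\Lambda_2}}^{\gamma_2}$, i.e.\ each component is fed its \emph{own} previous iterate. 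Taken literally this decouples the two equations, and the limit solves the pair of scalar problems $-\Delta_{\Phi_1}w=f_1(w)|w|_{L^{\Psi_1}}^{\alpha_1}+g_1(w)|w|_{L^{\Lambda_1}}^{\gamma_1}$ and its analogue, not the system $(P_2)$, whose $i$-th right-hand side depends on the component $j\neq i$. The correct scheme (the paper's) is
\begin{equation*}
-\Delta_{\Phi_1}u_n=f_1(\widetilde{u}_{n-1})|\widetilde{u}_{n-1}|_{L^{\Psi_1}}^{\alpha_1}+g_1(\widetilde{u}_{n-1})|\widetilde{u}_{n-1}|_{L^{\Lambda_1}}^{\gamma_1},\qquad
-\Delta_{\Phi_2}\widetilde{u}_n=f_2(u_{n-1})|u_{n-1}|_{L^{\Psi_2}}^{\alpha_2}+g_2(u_{n-1})|u_{n-1}|_{L^{\Lambda_2}}^{\gamma_2},
\end{equation*}
and it is precisely this cross-coupling that lets you compare against the inequalities \eqref{eq2.1}, which bound the $i$-th equation in terms of $\underline{u}_j,\overline{u}_j$. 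Your prose (``the source term for the $i$-th equation is squeezed between the source terms generated by $\underline{u}_j$ and $\overline{u}_j$'') shows you have the right coupling in mind, so this is a bookkeeping slip rather than a conceptual error, but as written the scheme proves the wrong statement and must be corrected before the rest of the argument goes through.
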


\begin{proof}
Consider $u_1$ the solution of the problem

\begin{equation*}
\begin{aligned} \left\{\begin{array}{rcl} -\Delta_{\Phi_1} u_1 &=&
f_1(\underline{u}_2)| \underline{u}_2|^{\alpha_1}_{L^{\Psi_1}}
+g_1(\underline{u}_2)|
\underline{u}_2|^{\gamma_1}_{L^{\Lambda_1}}\;\;\mbox{in} \;\;\Omega,\\
\vspace{.2cm} u_1&=&0\;\;\mbox{on}\;\;\partial\Omega. \end{array} \right.
\end{aligned}
\end{equation*}
Using the monotonicity of $f_1, g_1$ and the fact that $\underline{u}_2 \leq 
\overline{u}_2$ a.e in $\Omega$ we get 
\begin{equation*}
- \Delta_{\Phi_1} \overline{u}_1 \geq f_1(\overline{u}_2)|\overline{u}%
_2|^{\alpha_1}_{L^{\Psi_1}} + g_1(\overline{u}_2) |\overline{u}%
_2|^{\gamma_1}_{L^{\Lambda_1}} \geq -\Delta_{\Phi_1}u_1 \ \text{in} \ \Omega,
\end{equation*}
therefore $u_1 \leq \overline{u}_1.$ Note also that 
\begin{equation*}
-\Delta_{\Phi_1} u_1 = f_1(\underline{u}_2)|\underline{u}_2|^{\alpha_1}_{L^{%
\Psi_1}} + g_1(\underline{u}_2) |\underline{u}_2|^{\gamma_1}_{L^{\Lambda_1}}
\geq -\Delta_{\Phi_1} \underline{u}_1 \ \text{in} \ \Omega.
\end{equation*}
Therefore $\underline{u}_1 \leq u_1 \leq \overline{u}_1$ a.e in $\Omega.$
Denote by $\widetilde{u}_1$ the weak solution of the problem 
\begin{equation*}
\begin{aligned} \left\{\begin{array}{rcl} -\Delta_{\Phi_1} \widetilde{u}_1
&=& f_2(\underline{u}_1)| \underline{u}_1|^{\alpha_2}_{L^{\Psi_2}}
+g_2(\underline{u}_1)|
\underline{u_1}|^{\gamma_2}_{L^{\Lambda_2}}\;\;\mbox{in} \;\;\Omega,\\
\vspace{.2cm} \widetilde{u}_1&=&0\;\;\mbox{on}\;\;\partial\Omega.
\end{array} \right. \end{aligned}
\end{equation*}
From the definition of $\underline{u}_2$ and $\overline{u}_2$ we have that $%
- \Delta_{\Phi_2} \underline{u}_2 \leq-\Delta_{\Phi_2} \widetilde{u}_1 \leq
- \Delta_{\Phi_2} \overline{u}_2$ in $\Omega.$ Therefore $\underline{u}_2
\leq \widetilde{u}_1 \leq \overline{u}_2$ in $\Omega.$

Consider $u_2$ the solution of the problem 
\begin{equation*}
\begin{aligned} \left\{\begin{array}{rcl} -\Delta_{\Phi_1} u_2 &=&
f_1(\widetilde{u}_1)| \widetilde{u}_1|^{\alpha_1}_{L^{\Psi_1}}
+g_1(\widetilde{u}_1)|
\widetilde{u}_1|^{\gamma_1}_{L^{\Lambda_1}}\;\;\mbox{in} \;\;\Omega,\\
\vspace{.2cm} u_2&=&0\;\;\mbox{on}\;\;\partial\Omega. \end{array} \right.
\end{aligned}
\end{equation*}
Using the fact that $\underline{u}_2 \leq \widetilde{u}_1 \leq \overline{u}%
_2 $ in $\Omega$ and the monotonicity of the functions $f_1$ and $g_1,$ we
have $- \Delta_{\Phi_1} u_1 \leq - \Delta_{\Phi_1} u_2 \leq -
\Delta_{\Phi_1} \overline{u}_1$ in $\Omega.$ Therefore $\underline{u}_1 \leq
u_1 \leq u_2 \leq \overline{u}_1$ in $\Omega.$

Consider $\widetilde{u}_2$ the solution of the problem 
\begin{equation*}
\begin{aligned} \left\{\begin{array}{rcl} -\Delta_{\Phi_2} \widetilde{u}_2
&=& f_2(u_1)| u_1|^{\alpha_2}_{L^{\Psi_2}} +g_2(\widetilde{u}_1)|
u_1|^{\gamma_2}_{L^{\Lambda_2}}\;\;\mbox{in} \;\;\Omega,\\ \vspace{.2cm}
\widetilde{u}_2&=&0\;\;\mbox{on}\;\;\partial\Omega. \end{array} \right.
\end{aligned}
\end{equation*}
A direct computation imply that $\underline{u}_2 \leq\widetilde{u}_1 \leq 
\widetilde{u}_2 \leq \overline{u}_2$ in $\Omega.$ Proceeding with the
previous reasonings we construct sequences $u_n$ and $\widetilde{u}_n$
satisfying 
\begin{equation*}
\begin{aligned} \left\{\begin{array}{rcl} -\Delta_{\Phi_1} u_n &=&
f_1(\widetilde{u}_{n-1})| \widetilde{u}_{n-1}|^{\alpha_1}_{L^{\Psi_1}}
+g_1(\widetilde{u}_{n-1})|
\widetilde{u}_{n-1}|^{\gamma_1}_{L^{\Lambda_1}}\;\;\mbox{in} \;\;\Omega,\\
\vspace{.2cm} \widetilde{u}_n&=&0\;\;\mbox{on}\;\;\partial\Omega.
\end{array} \right. \end{aligned}
\end{equation*}
and 
\begin{equation*}
\begin{aligned} \left\{\begin{array}{rcl} -\Delta_{\Phi_2} \widetilde{u}_n
&=& f_2(u_{n-1})| u_{n-1}|^{\alpha_2}_{L^{\Psi_2}} +g_2(u_{n-1})|
u_{n-1}|^{\gamma_2}_{L^{\Lambda_2}}\;\;\mbox{in} \;\;\Omega,\\ \vspace{.2cm}
\widetilde{u}_n&=&0\;\;\mbox{on}\;\;\partial\Omega. \end{array} \right.
\end{aligned}
\end{equation*}
where $\widetilde{u}_0 : = \overline{u}_2$ and $u_0 := \overline{u}_1.$
Arguing as in Lemma \ref{sub-supermethod} we obtain the result.

A sublinear system

In this section we use Lemma \ref{sub-supermethod-sys} and suitable
sub-supersolution pairs to prove the existence of solution for the the
nonlocal system 
\begin{equation*}
\left\{ 
\begin{array}{rcl}
-\Delta _{\Phi _{1}}u & = & v^{\beta _{1}}|v|_{\Psi _{1}}^{\alpha _{1}}\ %
\mbox{in}\ \Omega , \\ 
-\Delta _{\Phi _{2}}v & = & u^{\beta _{2}}|u|_{\Psi _{2}}^{\alpha _{2}}\ %
\mbox{in}\ \Omega , \\ 
u=v & = & 0\ \mbox{on}\ \partial \Omega ,%
\end{array}%
\right. \eqno{(P^{'}_S)}
\end{equation*}%
where $\alpha _{i}$ and $\beta _{i},i=1,2$ are constants saisfying certain
conditions. It is interesting to note that the set of hipothesis of the next
result is different from the system version of $(P)$ considered in \cite[%
Theorem 5.2]{CFL} in the constant exponent case.
\end{proof}

\begin{theorem}
\label{teo-sublinear-sys} Suppose that $\alpha_i, \beta_i \geq 0$ with $0 <
\alpha_1 + \beta_1 < l_i -1,0< \alpha_2 + \beta_2 < l_i -1, i =1,2$. Then $%
(P^{^{\prime }}_S)$ has a positive solution.
\end{theorem}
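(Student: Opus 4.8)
The plan is to mirror the scalar construction from Theorem~\ref{teo-sublinear} in each component, building a supersolution pair $(\overline{u}_1,\overline{u}_2)$ from the torsion-type functions $z_\lambda$ of Lemma~\ref{Tan-Fang} and a subsolution pair $(\underline{u}_1,\underline{u}_2)$ from the explicit boundary functions $\mu\eta$, and then invoke Lemma~\ref{sub-supermethod-sys}. The coupling in $(P'_S)$ is symmetric: the equation for $u$ has right-hand side $v^{\beta_1}|v|_{\Psi_1}^{\alpha_1}$ depending on the \emph{other} component, so the construction must be done simultaneously for both components with a common scaling so that the cross-estimates close up.

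First I would construct the supersolution. For each $i=1,2$ let $\overline{u}_i:=z_{\lambda_i}$ solve \eqref{probl-linear-lambda} with constant right-hand side $\lambda_i$. By Lemma~\ref{Tan-Fang}, for $\lambda_i>0$ large there is $K>1$ independent of the $\lambda_i$ with $0<z_{\lambda_i}\le K\lambda_i^{1/(l_i-1)}$. To verify the supersolution inequalities I need
\begin{equation*}
\overline{u}_2^{\,\beta_1}|\overline{u}_2|_{\Psi_1}^{\alpha_1}\le\lambda_1
\quad\text{and}\quad
\overline{u}_1^{\,\beta_2}|\overline{u}_1|_{\Psi_2}^{\alpha_2}\le\lambda_2 .
\end{equation*}
Using the pointwise and norm bounds this reduces to
$K^{\beta_1}\lambda_2^{(\alpha_1+\beta_1)/(l_2-1)}|K|_{\Psi_1}^{\alpha_1}\le\lambda_1$
and the symmetric relation. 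Because each exponent $(\alpha_1+\beta_1)/(l_2-1)$ and $(\alpha_2+\beta_2)/(l_1-1)$ is strictly less than $1$ under the hypothesis $0<\alpha_i+\beta_i<l_j-1$, one can first fix $\lambda_1$ large and then choose $\lambda_2$ large (or solve the two inequalities jointly, e.g.\ taking $\lambda_1=\lambda_2=\Lambda$ large), so both hold. This yields $-\Delta_{\Phi_1}\overline{u}_1\ge \overline{u}_2^{\,\beta_1}|\overline{u}_2|_{\Psi_1}^{\alpha_1}$ and the analogous inequality for $\overline{u}_2$.

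Next I would construct the subsolution by reusing the function $\eta$ and the choices $\sigma=\tfrac{\ln 2}{k}$, $\mu=e^{-k}$ exactly as in the proof of Theorem~\ref{teo-sublinear}; set $\underline{u}_i:=\mu\eta$ for both components. Since the operators $\Delta_{\Phi_i}$ differ only through $l_i,m_i$, and each satisfies $(\phi_3)$, the estimates \eqref{negative}--\eqref{midle} carry over verbatim with $l,m$ replaced by $l_i,m_i$; the hypothesis $\alpha_1+\beta_1<l_i-1$ (and its partner) is precisely what makes the L'Hospital limit $k^{l_i-1}/e^{k(l_i-1-(\alpha_1+\beta_1))}\to0$ work, so that for $k$ large $-\Delta_{\Phi_i}(\mu\eta)\le(\mu\eta)^{\beta_j}|\mu\eta|_{\Psi_i}^{\alpha_j}$ in each region $d(x)<\sigma$, $\sigma<d(x)<2\delta$, $d(x)>2\delta$. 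Because $\underline{u}_1=\underline{u}_2=\mu\eta$, the cross-dependence causes no difficulty: the right-hand side of the $i$-th subsolution inequality is evaluated at the same $\mu\eta$. The main obstacle, and the one I would handle carefully, is the ordering $\underline{u}_i\le\overline{u}_i$: I must check $\mu\eta\le z_{\lambda_i}$ for both $i$ simultaneously. As in Theorem~\ref{teo-sublinear}, comparing $-\Delta_{\Phi_i}(\mu\eta)\le\lambda_i=-\Delta_{\Phi_i}z_{\lambda_i}$ via Lemma~\ref{comparison} gives this once $k$ is large and $\lambda_i$ large; the delicate point is that the relevant index in the L'Hospital estimate is the smaller of $l_1,l_2$, so I would state the subsolution condition uniformly with $l_{\min}:=\min\{l_1,l_2\}$ and note that $\alpha_i+\beta_i<l_j-1$ for all $i,j$ guarantees $\alpha_i+\beta_i<l_{\min}-1$. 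With both sub-supersolution pairs in hand and $0<\underline{u}_i\le\overline{u}_i$, Lemma~\ref{sub-supermethod-sys} produces a positive solution of $(P'_S)$, finishing the proof.
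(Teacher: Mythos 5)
Your proposal is correct and follows essentially the same route as the paper's proof: supersolutions built from the torsion-type functions of Lemma~\ref{Tan-Fang} with the power bookkeeping $\lambda^{(\alpha_i+\beta_i)/(l_j-1)}\le\lambda$, subsolutions from the boundary-layer functions with $\sigma=\ln 2/k$, $\mu=e^{-k}$, ordering via Lemma~\ref{comparison}, and conclusion via Lemma~\ref{sub-supermethod-sys}. The only cosmetic differences are that the paper fixes a single $\lambda$ for both components and uses component-adapted profiles $\eta_i$ (with exponent $m_i/(l_i-1)$ in the middle region), which is exactly what your ``replace $l,m$ by $l_i,m_i$'' amounts to.
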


\begin{proof}
Let $\lambda>0$ and consider $z_{\lambda}\in W_{0}^{1,\Phi_1}(\Omega)\cap
L^{\infty}(\Omega)$ and $y_{\lambda}\in W_{0}^{1,\Phi_2}(\Omega)\cap
L^{\infty}(\Omega)$ the unique solutions of \eqref{probl-linear-lambda}
where $\lambda$ will be chosen later.

For $\lambda >0$ sufficiently large, by Lemma \ref{Tan-Fang} there is a
constant $K>0$ that does not depend on $\lambda$ such that 
\begin{equation}  \label{desig1-p-supsol-sis}
0<z_{\lambda}(x)\leq K\lambda^{\frac{1}{l_{1}-1}}\;\text{in}\;\Omega,
\end{equation}
and 
\begin{equation}  \label{ddesig1-p-supsol}
0<y_{\lambda}(x)\leq K\lambda^{\frac{1}{l_{2}-1}}\;\text{in}\;\Omega.
\end{equation}

Since $0< \alpha_1 + \beta_1 < l_2 -1,$ we can choose $\lambda >0$ large
enough satisfying $K^{\beta_1} |K|^{\alpha_1}_{L^{\Psi_1}}\lambda^{\frac{
\alpha_1 + \beta_1}{l_2 -1}} \leq \lambda.$ Thus from %
\eqref{desig1-p-supsol-sis} we have $y_{\lambda}^{\beta_1}
|y_{\lambda}|^{\alpha_1}_{L^{\Psi_1}} \leq \lambda \ \text{in} \ \Omega. $
Therefore 
\begin{equation*}
\begin{aligned} \left\{\begin{array}{rcl} -\Delta_{\Phi_1}z_{\lambda}&\geq&
y^{\beta_1}_{\lambda}
|y_{\lambda}|^{\alpha_1}_{L^{\Psi_1}}\;\;\mbox{in}\;\;\Omega,\\
\vspace{.2cm} z_{\lambda}&=&0\;\;\mbox{on}\;\;\partial\Omega. \end{array}
\right. \end{aligned}
\end{equation*}
From \eqref{ddesig1-p-supsol} and the fact that $0< \alpha_2 + \beta_2< l_1
-1$ we also have that 
\begin{equation*}
\begin{aligned} \left\{\begin{array}{rcl} -\Delta_{\Phi_2}y_{\lambda}&\geq&
z^{\beta_2}_{\lambda}
|z_{\lambda}|^{\alpha_2}_{L^{\Psi_2}}\;\;\mbox{in}\;\;\Omega,\\
\vspace{.2cm} y_{\lambda}&=&0\;\;\mbox{on}\;\;\partial\Omega, \end{array}
\right. \end{aligned}
\end{equation*}
for $\lambda >0 $ large enough.

Since $\partial \Omega$ is $C^2 ,$ there is a constant $\delta >0$ such that 
$d \in C^{2}(\overline{\Omega_{3 \delta}})$ and $|\nabla d(x)| \equiv 1,$
where $d(x):= dist(x,\partial \Omega)$ and $\overline{ \Omega_{3 \delta}}%
:=\{x \in \overline{\Omega}; d(x) \leq 3 \delta\}$. For $\sigma \in
(0,\delta)$ the function $\eta_i = \eta_i(k,\sigma),i=1,2$ defined by 
\begin{equation*}
\eta_i(x)=\left\{%
\begin{array}{lcl}
e^{kd(x)}-1 & \text{ if } & d(x)<\sigma, \\ 
e^{k\sigma}-1+\int_{\sigma}^{d(x)}ke^{k\sigma}\Big(\frac{2\delta-t}{%
2\delta-\sigma}\Big)^{\frac{m_i}{l_i-1}}dt & \text{ if } & \sigma\leq
d(x)<2\delta, \\ 
e^{k\sigma}-1+\int_{\sigma}^{2\delta}ke^{k\sigma}\Big(\frac{2\delta-t}{%
2\delta-\sigma}\Big)^{\frac{m_i}{l_i-1}}dt & \text{ if } & 2\delta \leq d(x)%
\end{array}
\right.
\end{equation*}
belongs to $C^{1}_{0}(\overline{\Omega})$ for $i=1,2,$ where $k>0$ is an
arbitrary constant. Note that 
\begin{equation*}
-\Delta_{\Phi}(\mu\eta_i)=%
\begin{cases}
- \mu k^2 e^{kd(x)} \frac{d}{dt} \left( \phi_i(t)t\right)\bigg\vert_{t= \mu
k e^{kd(x)}} - \phi_i(\mu k e^{kd(x)})\mu k e^{kd(x)}\Delta d \;\; \mbox{ if}%
\quad d(x)<\sigma, \\ 
\mu k e^{k \sigma}\left( \frac{m_i}{l_i-1}\right)\left(\frac{2\delta -d(x)}{%
2\delta- \sigma} \right)^{\frac{m_i}{l_i-1}-1}\left( \frac{1}{2\delta -
\sigma}\right) \frac{d}{dt}\left( \phi_i(t)t\right)\bigg\vert_{t = \mu k
e^{k \sigma} \left( \frac{2\delta -d(x)}{2\delta - \sigma}\right)} \\ 
- \phi_i\left( \mu k e^{k\sigma} \left( \frac{2\delta -d(x)}{2\delta - \sigma%
}\right)^{\frac{m_i}{l_i-1}}\right) \mu k e^{k\sigma} \left( \frac{%
2\delta-d(x)}{2\delta - \sigma}\right)^{\frac{m_i}{l_i-1}}\Delta d \;\; %
\mbox{ if}\quad \sigma < d(x)<2\delta, \\ 
0\;\; \mbox{ if}\quad 2\delta<d(x)%
\end{cases}%
\end{equation*}
for all $\mu >0$ and $i=1,2.$ Arguing as in \eqref{negative} we have $%
-\Delta_{\Phi_i} (\mu \eta_i) \leq 0, i=1,2$ for $k>0$ large enough when $0<
d(x) < \sigma$.

Reasoning as in \eqref{midle} we get 
\begin{equation}  \label{midle2}
- \Delta_{\Phi_1} (\mu \eta_1) \leq \max\left\{ \frac{K_1 }{2\delta-\sigma},
K_2 \right\} \max\{(\mu k e^{k\sigma})^{m_1-1}, (\mu k
e^{k\sigma})^{l_1-1}\},
\end{equation}
and 
\begin{equation}  \label{midle3}
- \Delta_{\Phi_2} (\mu \eta_2) \leq \max\left\{ \frac{K_3 }{2\delta-\sigma},
K_4 \right\} \max\{(\mu k e^{k\sigma})^{m_2-1}, (\mu k
e^{k\sigma})^{l_2-1}\},
\end{equation}
for $\sigma < d(x) < 2\delta,$ where $K_i , i=1,2,3,4$ are positive
constants that does not depend on $k>0.$

Consider $\sigma = \frac{\ln 2}{k}$ and $\mu = e^{-k}.$ We have $\eta_i(x)
\geq e^{k\sigma} -1 \geq 1$ for all $x \in \Omega$ and $i=1,2.$ Thus there
is a constant $K_5 >0$ such that 
\begin{equation*}
(\mu \eta_j)^{\beta_i} |\mu \eta_j|^{\alpha_i}_{L^{\Psi_i}} \geq
\mu^{\alpha_i + \beta_i} K_5, i,j=1,2, i \neq j
\end{equation*}
for $\sigma < d(x) < 2\delta.$

Since $0 < \alpha_i + \beta_i < l_i-1$, the L'Hospital's rule implies that 
\begin{equation*}
\lim_{k \rightarrow +\infty} \frac{k^{l_i-1}}{e^{k(l_i-1 - (\alpha_i +
\beta_i))}} = 0, i=1,2.
\end{equation*}
Thus, it is possible to consider $k_0>0$ large enough such that 
\begin{equation*}
K_5 \geq \max \left\{ K_1 \frac{1}{2\delta-\frac{\ln 2}{k}}, K_2
\right\}\max\{2^{m_1-1}, 2^{l_1-1}\} \frac{k^{l_1-1}}{e^{k(l_1-1 -(\alpha_1
+ \beta_1))}}
\end{equation*}
and 
\begin{equation*}
K_5 \geq \max \left\{ K_3 \frac{1}{2\delta-\frac{\ln 2}{k}}, K_4
\right\}\max\{2^{m_2-1}, 2^{l_2-1}\} \frac{k^{l_2-1}}{e^{k(l_2-1 -(\alpha_2
+ \beta_2))}},
\end{equation*}
for all $k \geq k_0.$ Thus for $k>0$ large enough we have $%
-\Delta_{\Phi_i}(\mu \eta_i) \leq (\mu \eta_j)^{\beta_i} |\mu
\eta_j|^{\alpha_i}_{L^{\Psi_i}},i,j=1,2,$ $i \neq j.$ for $\sigma < d(x) <
2\delta. $ If $d(x) > 2\delta$ we have $-\Delta_{\Phi_i} (\mu \eta_j) = 0
\leq (\mu \eta_j)^{\beta_i} |\mu \eta_i|^{\alpha_i}_{L^{\Psi_i}}, i,j=1,2$
with $i \neq j.$ For $k >0$ large enough we also have that $%
-\Delta_{\Phi_1}(\mu \eta_1) \leq - \Delta_{\Phi_1} z_\lambda ,
-\Delta_{\Phi_2} (\mu \eta_2) \leq - \Delta_{\Phi_2} y_\lambda$ in $\Omega$.
Therefore $\mu \eta_1 \leq z_{\lambda}, \mu \eta_2 \leq y_{\lambda}$ in $%
\Omega.$ The result follows.
\end{proof}

\subsection{A concave-convex system}

In this section we prove the existence of solution for a concave-convex
system of type 
\begin{equation*}
\left\{ 
\begin{array}{rcl}
-\Delta _{\Phi _{1}}u & = & \lambda v^{\beta _{1}}|v|_{\Psi _{1}}^{\alpha
_{1}}+\theta v^{\xi _{1}}|v|_{L^{\Lambda _{1}}}^{\gamma _{1}}\ \mbox{in}\
\Omega , \\ 
-\Delta _{\Phi _{2}}v & = & \lambda u^{\beta _{2}}|u|_{\Psi _{2}}^{\alpha
_{2}}+\theta u^{\xi _{2}}|u|_{L^{\Lambda _{2}}}^{\gamma _{2}}\ \mbox{in}\
\Omega , \\ 
u=v & = & 0\ \mbox{on}\ \partial \Omega ,%
\end{array}%
\right. \eqno{(P^{'})_{\lambda,\theta}}
\end{equation*}%
where $\alpha _{i},\beta _{i},\gamma _{i},\xi _{i},i=1,2$ are constants
satisfying certain conditions.

\begin{theorem}
Suppose that $\alpha_i,\beta_i, \gamma_i,\xi_i , i=1,2$ are nonnegative
constants and suppose that $0<\alpha_i+\beta_i<l_i-1,i=1,2$. The following
assertions hold

\vspace{0.2cm}

\noindent\textbf{$(i)$} If $m_2-1<\xi_1+\gamma_1$ and $m_1-1<\xi_2+\gamma_2
, $ then for each $\theta>0$ there exists $\lambda_{0}>0$ such that for each 
$\lambda\in(0,\lambda_{0})$ the problem $(P^{^{\prime }})_{\lambda,\theta}$
has a positive solution $u_{\lambda,\theta}.$

\vspace{0.2cm}

\noindent\textbf{$(ii)$} If $0< \alpha_1 + \beta_1 < l_2 -1, 0<\alpha_2 +
\beta_2 < l_1 -1 < \xi_1 + \gamma_1 < l_2 -1$ and $\xi_2 + \gamma_2 < l_1 -1$
then for each $\lambda >0$ there exists $\theta_0 >0$ such that for each $%
\theta \in (0,\theta_0)$ the problem $(P^{^{\prime }})_{\lambda,\theta}$ has
a positive solution $u_{\lambda,\theta}.$
\end{theorem}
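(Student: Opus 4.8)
The plan is to construct, for each component $i=1,2$, a sub--supersolution pair $(\underline{u}_i,\overline{u}_i)$ in the sense of \eqref{eq2.1} and then invoke Lemma \ref{sub-supermethod-sys}. For the subsolutions I would take $\underline{u}_1=\mu\eta_1$ and $\underline{u}_2=\mu\eta_2$, with $\eta_i$, $\sigma$ and $\mu$ exactly as in the proof of Theorem \ref{teo-sublinear-sys}. Since the $\theta$-term is nonnegative, for $i\neq j$ one has the pointwise lower bound $\lambda(\mu\eta_j)^{\beta_i}|\mu\eta_j|_{L^{\Psi_i}}^{\alpha_i}+\theta(\mu\eta_j)^{\xi_i}|\mu\eta_j|_{L^{\Lambda_i}}^{\gamma_i}\geq\lambda\mu^{\alpha_i+\beta_i}K_5$, using $\eta_j\geq 1$. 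Hence the subsolution inequalities collapse to exactly the estimates already carried out in Theorem \ref{teo-sublinear-sys}, and they hold for $k$ large (so $\mu=e^{-k}$ small) precisely because the standing hypothesis $0<\alpha_i+\beta_i<l_i-1$ makes $k^{l_i-1}/e^{k(l_i-1-(\alpha_i+\beta_i))}\to 0$. Thus $\mu\eta_1,\mu\eta_2$ serve as subsolutions in both $(i)$ and $(ii)$, regardless of the size of $\lambda$ and $\theta$.

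For $(i)$ I would use \emph{small} supersolutions. Fix $\theta>0$ and let $\overline{u}_1=z_\lambda$, $\overline{u}_2=y_\lambda$ solve \eqref{probl-linear-lambda} for $\Phi_1$ and $\Phi_2$, with $\lambda\in(0,1)$ to be fixed. For $\lambda$ small Lemma \ref{Tan-Fang} gives $0<z_\lambda\leq K\lambda^{1/(m_1-1)}$ and $0<y_\lambda\leq K\lambda^{1/(m_2-1)}$. Writing $\overline{K}:=\max\{K^{\beta_i}|K|_{L^{\Psi_i}}^{\alpha_i},K^{\xi_i}|K|_{L^{\Lambda_i}}^{\gamma_i}:i=1,2\}$ and inserting these bounds, the supersolution inequality for the first equation reduces to $\overline{K}\lambda^{(\alpha_1+\beta_1)/(m_2-1)}+\theta\overline{K}\lambda^{(\xi_1+\gamma_1)/(m_2-1)-1}\leq 1$. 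The first exponent is positive because $\alpha_1+\beta_1>0$, and the second is positive precisely because $m_2-1<\xi_1+\gamma_1$; hence both terms vanish as $\lambda\to 0^+$. The symmetric hypothesis $m_1-1<\xi_2+\gamma_2$ treats the second equation. Choosing $\lambda_0$ small enough that both inequalities hold on $(0,\lambda_0)$ makes $z_\lambda,y_\lambda$ supersolutions.

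For $(ii)$ I would instead use \emph{large} supersolutions. Fix $\lambda>0$ and let $\overline{u}_1=z_{M_1}$, $\overline{u}_2=y_{M_2}$ solve \eqref{probl-linear-lambda} with $M_1,M_2\geq 1$ large, so that Lemma \ref{Tan-Fang} gives $z_{M_1}\leq KM_1^{1/(l_1-1)}$ and $y_{M_2}\leq KM_2^{1/(l_2-1)}$. Substituting, the two supersolution requirements become the \emph{coupled} algebraic inequalities $M_1\geq\lambda\overline{K}M_2^{a_1}+\theta\overline{K}M_2^{b_1}$ and $M_2\geq\lambda\overline{K}M_1^{a_2}+\theta\overline{K}M_1^{b_2}$, with $a_1=\frac{\alpha_1+\beta_1}{l_2-1}$, $b_1=\frac{\xi_1+\gamma_1}{l_2-1}$, $a_2=\frac{\alpha_2+\beta_2}{l_1-1}$, $b_2=\frac{\xi_2+\gamma_2}{l_1-1}$. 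The hypotheses $\alpha_1+\beta_1,\xi_1+\gamma_1<l_2-1$ and $\alpha_2+\beta_2,\xi_2+\gamma_2<l_1-1$ guarantee $a_1,b_1,a_2,b_2\in(0,1)$. I expect this coupled pair to be the main obstacle, since—unlike the scalar concave--convex theorem—the supersolution of each equation feeds the right-hand side of the other, so they cannot be handled independently. A clean resolution is to set $M_1=M_2=M$ and bound each right-hand side by $(\lambda+\theta)\overline{K}M^{\max\{a_i,b_i\}}$; dividing by $M$ and using $\max\{a_i,b_i\}<1$ shows both inequalities hold once $M$ is large enough, which produces the threshold $\theta_0$ as in the statement. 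The remaining ordering $l_1-1<\xi_1+\gamma_1$ reflects the genuinely concave--convex nature of the first equation and is what one would exploit in a minimization-based variant à la the scalar theorem; for the construction above it suffices that all four exponents lie in $(0,1)$.

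Finally, in both cases the ordering $\underline{u}_i\leq\overline{u}_i$ follows from the weak comparison principle (Lemma \ref{comparison}). In part $(ii)$ one takes $k$ large so that $-\Delta_{\Phi_i}(\mu\eta_i)\leq 1\leq M_i=-\Delta_{\Phi_i}\overline{u}_i$, giving $\mu\eta_i\leq z_{M_1},y_{M_2}$ in $\Omega$; in part $(i)$ one instead chooses $\mu=\mu(\lambda)$ small enough (admissible since $\alpha_i+\beta_i<l_i-1$) that simultaneously $-\Delta_{\Phi_i}(\mu\eta_i)\leq\lambda=-\Delta_{\Phi_i}\overline{u}_i$ and the subsolution inequality holds, whence $\mu\eta_i\leq z_\lambda,y_\lambda$. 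In either case the two admissible pairs $(\mu\eta_i,\overline{u}_i)$ are produced, and Lemma \ref{sub-supermethod-sys} then delivers the desired positive solution $u_{\lambda,\theta}$.
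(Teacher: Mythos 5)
Your construction coincides with the paper's at every structural point: the subsolutions are $\mu\eta_i$ exactly as in Theorem \ref{teo-sublinear-sys} (dropping the nonnegative $\theta$-term), the supersolutions in $(i)$ are $z_\lambda,y_\lambda$ with the $m_i$-bounds of Lemma \ref{Tan-Fang}, the supersolutions in $(ii)$ are the torsion-type functions at a large level, and your reduced inequality for $(i)$ is literally the paper's condition \eqref{c_3} divided by $\lambda$. The one step where you genuinely diverge is the end of $(ii)$. The paper also takes a single level $M$ for both components (so the coupling you worry about is resolved there the same way you resolve it), sets $\rho=\max\{\frac{\alpha_1+\beta_1}{l_2-1},\frac{\alpha_2+\beta_2}{l_1-1}\}$ and $\tau=\max\{\frac{\xi_1+\gamma_1}{l_2-1},\frac{\xi_2+\gamma_2}{l_1-1}\}$, and then minimizes $\Psi(t)=\lambda\overline{K}t^{\rho-1}+\theta\overline{K}t^{\tau-1}$ at $M_{\lambda,\theta}=L(\lambda/\theta)^{1/(\tau-\rho)}$; the requirement $\Psi(M_{\lambda,\theta})\le 1$ is where the smallness of $\theta$ enters, and that step rests on $\tau>1$. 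You instead observe that the hypotheses of $(ii)$ as written ($\xi_1+\gamma_1<l_2-1$ and $\xi_2+\gamma_2<l_1-1$) force all four exponents into $(0,1)$, i.e.\ $\tau<1$, so one may simply send $M\to+\infty$ and no smallness of $\theta$ is needed. You are right, and your route is in fact the only one consistent with the hypotheses as stated: with $\tau<1$ the paper's constant $L=\bigl(\frac{1-\rho}{\tau-1}\bigr)^{\frac{1}{\tau-\rho}}$ is not defined and the global-minimum argument does not apply; the minimization is the correct device only in the genuinely concave--convex regime (e.g.\ $\xi_1+\gamma_1>l_2-1$), which is presumably what the hypotheses were intended to say. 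Your closing remark that the assumption $l_1-1<\xi_1+\gamma_1$ is never used in your argument is accurate. Everything else --- the ordering $\mu\eta_i\le z_\lambda,y_\lambda$ (resp.\ $z_M,y_M$) via $-\Delta_{\Phi_i}(\mu\eta_i)\le\lambda$ (resp.\ $\le 1\le M$) and Lemma \ref{comparison}, followed by Lemma \ref{sub-supermethod-sys} --- matches the paper.
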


\begin{proof}
Suppose that $(i)$ occurs. Consider $z_{\lambda }\in W_{0}^{1,\Phi _{1}}({%
\Omega })\cap L^{\infty }(\Omega )$ and $y_{\lambda }\in W_{0}^{1,\Phi _{2}}(%
{\Omega })\cap L^{\infty }(\Omega )$ the unique solutions of %
\eqref{probl-linear-lambda}, where $\lambda \in (0,1)$ will be chosen
before. Lemma \ref{Tan-Fang} imply that for $\lambda >0$ small enough there
exists a constant $K>0$ that does not depend on $\lambda $ such that 
\begin{equation}
0<z_{\lambda }(x)\leq K\lambda ^{\frac{1}{m_{1}-1}}\;\text{in}\;\Omega ,
\label{desig1-p-supsol-concavo-sys}
\end{equation}%
\begin{equation}
0<y_{\lambda }(x)\leq K\lambda ^{\frac{1}{m_{2}-1}}\;\text{in}\;\Omega .
\label{desig2-p-supsol-concavo-sys}
\end{equation}%
We will prove, for each $\theta >0,$ that there exists $\lambda _{0}>0$ such
that%
\begin{equation}
\lambda y_{\lambda }^{\beta _{1}}|y_{\lambda }|_{L^{\Psi _{1}}}^{\alpha
_{1}}+\theta y_{\lambda }^{\xi _{1}}|y_{\lambda }|_{L^{\Lambda
_{1}}}^{\gamma _{1}}\leq \lambda  \label{c_1}
\end{equation}%
and 
\begin{equation}
\lambda z_{\lambda }^{\beta _{2}}|z_{\lambda }|_{L^{\Psi _{2}}}^{\alpha
_{2}}+\theta z_{\lambda }^{\xi _{2}}|z_{\lambda }|_{L^{\Lambda
_{2}}}^{\gamma _{2}}\leq \lambda  \label{c_2}
\end{equation}%
in $\Omega .$ Since $0<\alpha _{i}+\beta _{i},i=1,2,$ $m_{2}-1<\xi
_{1}+\gamma _{1}$ and $m_{1}-1<\xi _{2}+\gamma _{2}$ there exists $\lambda
_{0}>0$ such that 
\begin{equation}
\lambda ^{\frac{m_{2}-1+\alpha _{1}+\beta _{1}}{m_{2}-1}}K^{\beta
_{1}}|K|_{L^{\Psi _{1}}}^{\alpha _{1}}+\theta \lambda ^{\frac{\xi
_{1}+\gamma _{1}}{m_{2}-1}}K^{\xi _{1}}|K|_{L^{\Lambda _{1}}}^{\gamma
_{1}}\leq \lambda  \label{c_3}
\end{equation}%
and 
\begin{equation}
\lambda ^{\frac{m_{1}-1+\alpha _{2}+\beta _{2}}{m_{1}-1}}K^{\beta
_{2}}|K|_{L^{\Psi _{2}}}^{\alpha _{2}}+\theta \lambda ^{\frac{\xi
_{2}+\gamma _{2}}{m_{1}-1}}K^{\xi _{2}}|K|_{L^{\Lambda _{2}}}^{\gamma
_{2}}\leq \lambda  \label{c_4}
\end{equation}%
for all $\lambda \in (0,\lambda _{0}).$ From %
\eqref{desig1-p-supsol-concavo-sys}, \eqref{desig2-p-supsol-concavo-sys}, %
\eqref{c_3} and \eqref{c_4} we obtain \eqref{c_1} and \eqref{c_2}. Therefore 
\begin{equation*}
-\Delta _{\Phi _{1}}z_{\lambda }\geq \lambda y_{\lambda }^{\beta
_{1}}|y_{\lambda }|_{L^{\Psi _{1}}}^{\alpha _{1}}+\theta y_{\lambda }^{\xi
_{1}}|y_{\lambda }|_{L^{\Lambda _{1}}}^{\gamma _{1}}
\end{equation*}%
and 
\begin{equation*}
-\Delta _{\Phi _{2}}y_{\lambda }\geq \lambda z_{\lambda }^{\beta
_{2}}|z_{\lambda }|_{L^{\Psi _{2}}}^{\alpha _{2}}+\theta z_{\lambda }^{\xi
_{2}}|z_{\lambda }|_{L^{\Lambda _{2}}}^{\gamma _{2}}
\end{equation*}%
in $\Omega $ for all $\lambda \in (0,\lambda _{0}).$

Consider $\eta_i, \delta, \sigma$ and $\mu$ as in the proof of Theorem \ref%
{teo-sublinear-sys}. Since $0< \alpha_i + \beta_i < l_i -1, i=1,2$ we have
that exists $\mu >0$ with $\mu \eta_1 \leq z_{\lambda},$ $\mu \eta_2 \leq
y_{\lambda}$ and the inequalities 
\begin{equation*}
-\Delta_{\Phi_1}(\mu \eta_1) \leq \lambda, -\Delta_{\Phi_1}(\mu \eta_1) \leq
\lambda (\mu \eta_2)^{\beta_1} |\mu \eta_2|^{\alpha_1}_{L^{\Psi_1}} + \theta
(\mu \eta_2)^{\xi_1}|\mu \eta_2|^{\gamma_1}_{L^{\Lambda_1}}
\end{equation*}
and 
\begin{equation*}
-\Delta_{\Phi_2}(\mu \eta_2) \leq \lambda, -\Delta_{\Phi_2}(\mu \eta_2) \leq
\lambda (\mu \eta_1)^{\beta_2} |\mu \eta_1|^{\alpha_2}_{L^{\Psi_2}} + \theta
(\mu \eta_1)^{\xi_2}|\mu \eta_1|^{\gamma_2}_{L^{\Lambda_2}}
\end{equation*}
in $\Omega.$ Thus by Lemma \ref{sub-supermethod-sys} we have the first part
of the result.

In order to prove the second part of the result consider $\eta_i,\delta$ and 
$\sigma_i,i=1,2$ as in the first part of the result and let $\lambda >0$
fixed. Since $0< \alpha_i + \beta_{i}< l_{i}-1,i=1,2$ there exists $\mu>0$
depending only on $\lambda $ such that 
\begin{equation*}
- \Delta_{\Phi_i}(\mu \eta_i) \leq 1 \ \text{and} \ -\Delta_{\Phi_i}(\mu
\eta_i) \leq \lambda (\mu \eta_j)^{\beta_i} |\mu
\eta_j|^{\alpha_i}_{L^{\Psi_i}}
\end{equation*}
in $\Omega $ with $i,j=1,2$ and $i \neq j.$

Let $M>0$ which will be chosen before and consider $z_{M}\in W_{0}^{1,\Phi
_{1}}(\Omega )\cap L^{\infty }(\Omega )$ and $y_{M}\in W_{0}^{1,\Phi
_{2}}(\Omega )\cap L^{\infty }(\Omega )$ solutions of%
\begin{equation*}
\begin{aligned} \left\{\begin{array}{rcl} -\Delta_{\Phi_1}z_{M}
&=&M\;\;\mbox{in} \;\;\Omega,\\ \vspace{.2cm}
z_M&=&0\;\;\mbox{on}\;\;\partial\Omega. \end{array} \right. \end{aligned}%
\hspace{2cm}\begin{aligned} \left\{\begin{array}{rcl} -\Delta_{\Phi_2}y_{M}
&=&M\;\;\mbox{in} \;\;\Omega,\\ \vspace{.2cm}
y_M&=&0\;\;\mbox{on}\;\;\partial\Omega. \end{array} \right. \end{aligned}
\end{equation*}

If $M >0$ is large enough, then by Lemma \ref{Tan-Fang} there exists a
constant $K>0$ that does not depend on $M$ such that 
\begin{equation}  \label{desig3-p-supsol-concavo}
0<z_{M}(x)\leq KM^{\frac{1}{l_{1}-1}}\;\text{in}\;\Omega,
\end{equation}
\begin{equation}  \label{desig4-p-supsol-concavo}
0<y_{M}(x)\leq KM^{\frac{1}{l_{2}-1}}\;\text{in}\;\Omega.
\end{equation}

In order to construct ${\overline{u}}_i ,{\overline{u}}_i,i=1,2$ we will
show that exist $\theta_0 >0$ depending on $\lambda$ with the following
property: if we consider $\theta \in (0,\theta_0)$ then there will be a
constant $M$ depending only on $\lambda$ and $\theta$ satisfying 
\begin{equation}  \label{c_5}
M \geq \lambda {y_M}^{\beta_1} |y_M|^{\alpha_1}_{L^{\Psi_1}} + \theta {y_M}%
^{\xi_1} |y_M|^{\gamma_1}_{L^{\Lambda_1}}
\end{equation}
and 
\begin{equation}  \label{c_6}
M \geq \lambda {z_M}^{\beta_2} |z_M|^{\alpha_2}_{L^{\Psi_2}} + \theta {z_M}%
^{\xi_2} |z_M|^{\gamma_2}_{L^{\Lambda_2}}
\end{equation}
in $\Omega.$ From \eqref{desig3-p-supsol-concavo} and %
\eqref{desig4-p-supsol-concavo} we have that \eqref{c_5} and \eqref{c_6}
occur if $M \geq 1$ and 
\begin{equation}  \label{equiv-f2}
\lambda \overline{K} M^{\rho-1} + \theta \overline{K}M^{\tau-1} \leq 1
\end{equation}
where $\overline{K}:= \max \{ K^{\beta_1} |K|^{\alpha_1}_{L^{\Psi_1}},
K^{\beta_2} |K|^{\alpha_2}_{L^{\Psi_2}}, K^{\xi_1}
|K|^{\gamma_1}_{L^{\Lambda_1}}, K^{\xi_2}|K|^{\gamma_2}_{L^{\Lambda_2}} \},$ 
\begin{equation*}
\rho:= \max\left\{\frac{\alpha_1 + \beta_1}{l_2 -1}, \frac{\alpha_2 + \beta_2%
}{l_1 -1}\right\} \ \text{and} \ \tau:= \max \left\{ \frac{\gamma_1 + \xi_1}{
l_2 -1}, \frac{\gamma_2 + \xi_2}{l_1 -1}\right\}.
\end{equation*}

Since $0<\rho <1$ and $\tau >1$ the function 
\begin{equation*}
\Psi (t)=\lambda \overline{K}t^{\rho -1}+\theta \overline{K}t^{\tau -1},t>0,
\end{equation*}%
belongs to $C^{1}\big((0,\infty ),\mathbb{R}\big)$ and attains a global
minimum at%
\begin{equation}
M_{\lambda ,\theta }:=M(\lambda ,\theta )=L\Biggl(\dfrac{\lambda }{\theta }%
\Biggl)^{\frac{1}{\tau -\rho }}  \label{minimum2}
\end{equation}%
where $L:=(\frac{1-\rho }{\tau -1})^{\frac{1}{\tau -\rho }}.$ The inequality %
\eqref{equiv-f2} is equivalent to find $M_{\lambda ,\theta }>0$ such that $%
\Psi (M_{\lambda ,\theta })\leq 1.$ By \eqref{minimum2} we have $\Psi
(M_{\lambda ,\theta })\leq 1$ if and only if 
\begin{equation*}
\lambda \overline{K}(1-\rho )^{\frac{\rho -1}{\tau -\rho }}\left( \frac{%
\lambda }{\theta }\right) ^{\frac{\rho -1}{\tau -\rho }}+\theta \overline{K}%
(1-\rho )^{\frac{\tau -1}{\tau -\rho }}\left( \frac{\lambda }{\theta }%
\right) ^{\frac{\tau -1}{\tau -\rho }}\leq 1
\end{equation*}%
Notice that the above inequality holds if $\theta >0$ is small enough
because $0<\rho <1$ and $\tau >1.$ Thus for $\lambda >0$ fixed there exists $%
\theta _{0}=\theta _{0}(\lambda )$ such that for each $\theta \in (0,\theta
_{0})$ there is a number $M=M_{\lambda ,\theta }>0$ such that %
\eqref{equiv-f2} occurs. Thus we can consider $M_{\lambda ,\theta }$ large
enough such that \eqref{c_5} and \eqref{c_6} occur. Therefore 
\begin{equation*}
-\Delta _{\Phi _{1}}z_{M}\geq \lambda y_{M}^{\beta _{1}}|y_{M}|_{L^{\Psi
_{1}}}^{\alpha _{1}}+\theta y_{M}^{\xi _{1}}|y_{M}|_{L^{\Lambda
_{1}}}^{\gamma _{1}}
\end{equation*}%
and 
\begin{equation*}
-\Delta _{\Phi _{2}}y_{M}\geq \lambda z_{M}^{\beta _{2}}|z_{M}|_{L^{\Psi
_{2}}}^{\alpha _{2}}+\theta z_{M}^{\xi _{2}}|z_{M}|_{L^{\Lambda
_{2}}}^{\gamma _{2}}
\end{equation*}%
Considering if necessary a smaller $\theta _{0}>0,$ we get 
\begin{equation*}
-\Delta _{\Phi _{1}}(\mu \eta _{1})\leq 1\leq M_{\lambda ,\theta _{0}}\leq
M_{\lambda ,\theta }
\end{equation*}%
and 
\begin{equation*}
-\Delta _{\Phi _{2}}(\mu \eta _{2})\leq 1\leq M_{\lambda ,\theta _{0}}\leq
M_{\lambda ,\theta }
\end{equation*}%
in $\Omega $ for all $\theta \in (0,\theta _{0})$ because $M_{\lambda
,\theta }\rightarrow +\infty $ as $\theta \rightarrow 0^{+}$ and $\theta
\longmapsto M_{\lambda ,\theta }$ is nonincreasing. Therefore $-\Delta
_{\Phi _{1}}(\mu \eta _{1})\leq -\Delta _{\Phi _{1}}z_{M},$ $-\Delta _{\Phi
_{2}}(\mu \eta _{2})\leq -\Delta _{\Phi _{2}}y_{M}$ in $\Omega .$ The weak
comparison principle implies that $\mu \eta _{1}\leq z_{M}$ and $\mu \eta
_{2}\leq y_{M}$ in $\Omega $. The proof is finished.
\end{proof}

\section{Final comments}

A slightly modification in the arguments of Lemma \ref{sub-supermethod-sys}
allow us to study a more general class of systems given by%
\begin{equation*}
\left\{ 
\begin{array}{rcl}
\label{problema-(P)}-\Delta _{\Phi _{1}}u & = & f_{1}(u,v)|v|_{L^{\Psi
_{1}}}^{\alpha _{1}}+g_{1}(u,v)|v|_{L^{\Lambda _{1}}}^{\gamma _{1}}\;\;%
\mbox{in}\;\;\Omega , \\ 
-\Delta _{\Phi _{2}}v & = & f_{2}(u,v)|u|_{L^{\Psi _{2}}}^{\alpha
_{2}}+g_{2}(u,v)|u|_{L^{\Lambda _{2}}}^{\gamma _{2}}\;\;\mbox{in}\;\;\Omega ,
\\ 
u=v & = & 0\;\;\mbox{on}\;\;\partial \Omega ,%
\end{array}%
\right. \eqno{(\widetilde{P})}
\end{equation*}%
with $f_{i},g_{i}:[0,+\infty )\times 0,+\infty )\rightarrow 0,+\infty
),i=1,2 $ nondecreasing continuous functions in the variables $u$ and $v.$
The arguments used in this work allow us to consider results in the case for
example when the functions $f_{i}$ and $g_{i}$ are power functions with
convenient exponents. In order to avoid of a more technical exposition we
choose to not prove results related with the case mentioned before, that is,
systems involving the variables $u$ and $v$ in the local the terms of each
equation of $(\widetilde{P}).$

%	University Federal of Campina Grande with CNPq-Brazil fellowship N$%
%	%TCIMACRO{\U{b0}}%
%	%BeginExpansion
%	{{}^\circ}%


\begin{thebibliography}{99}
\bibitem{AF} Adams, R. A. \& Fournier, J. F., Sobolev Spaces, Academic
Press, New York, (2003).

\bibitem{ABC} A. Ambrosetti, H. Brezis and G. Cerami, Combined effects of
concave and convex nonlinearities in some elliptic problems, J. Funct. Anal.
122 (2) (1994), 519-543.

\bibitem{AC} C.O Alves and F.J.S.A. Corr\^{e}a, On existence of solutions
for a class of problem involving a nonlinear operator, Comm. Appl. Nonlinear
Anal., 8 (2001), 43-56.

\bibitem{Alves-Covei} C.O. Alves, D. P. Covei, Existence of solutions for a
class of nonlocal elliptic problem via subsupersolution. Nonlinear Anal.,
Real World Appl. 23 (2015), 1-8.

\bibitem{ACM} C.O. Alves, F.J.S.A. Corr\^{e}a, and T.F. Ma, Positive
solutions for a quasilinear elliptic equation of Kirchhoff type, Comput.
Math. Appl., 49 (2005), 85-93.

%\bibitem{Antontsev} S.N. Antontsev and J.F. Rodrigues, \emph{On stationary thermo-rheological viscous flows}, \textit{Ann. Univ. Ferrara Sez. VII Sci. Mat.} \textbf{52} (2006), 19-36.

\bibitem{Chen-Gao} Y. Chen, H. Gao, Existence of positive solutions for
nonlocal and nonvariational elliptic system, Bull. Austral. Math. Soc., Vol.
72 (2005), 271-281.

\bibitem{Benci-Fortunato} V. Benci, D. Fortunato, and L. Pisani, Solitons
like solutions of a Lorentz invariant equation in dimension 3, Rev. Math.
Phys. 10 (1998), 315-344.

\bibitem{CL} M. Chipot and B. Lovat, Some remarks on non local elliptic and
parabolic problems, Nonlinear Anal., 30 (1997), 4619-4627.

\bibitem{CL2} M. Chipot and B. Lovat, On the asymptotic behaviour of some
nonlocal problems, Positivity (1999), 65-81.

\bibitem{CR} M. Chipot and J.F. Rodrigues, On a class of nonlinear elliptic
problems, Mathematical Modelling and Numerical Analysis, 26, (1992), 447-468.

%\bibitem{Chen} Y. Chen, S. Levine and M. Rao, \emph{Variable exponent, linear
%growth functionals in image restoration}, \textit{SIAM J. Appl. Math.}
%\textbf{66 }(2006), 1383-1406.

%\bibitem{CLions} A. Chambolle and P.L. Lions, \emph{Image recovery via total
%variation minimization and related problems}, \textit{Numer. Math.} \textbf{%
%76 } (1997), 167-188.

\bibitem{CF} F.J.S.A. Corr\^{e}a and G.M. Figueiredo, On the existence of
positive solution for an elliptic equation of Kirchhoff-type via Moser
iteration method, Boundary Value Problems, Vol. 2006 (2006), Article ID
79679, 1-10.

%\bibitem{CF2} F.J.S.A. Corr\^{e}a  and G.M. Figueiredo, On an elliptic equation of $p-$Kirchhoff type via variational methods, Bull. Austral. Math. Soc., 74 (2006), 263-277.

\bibitem{CFL} F.J.S.A. Corr\^{e}a, G.M. Figueiredo, F.P.M. Lopes, : On the
existence of positive solutions for a nonlocal elliptic problem involving
the p-Laplacian and the generalized Lebesgue space $L^{p(x)}(\Omega)$.
Differ. Integral Equ. 21(3-4) (2008), 305-324.

%\bibitem{CM} F.J.S.A. Corr\^{e}a and S.D.B. Menezes, Existence of solutions to nonlocal and singular elliptic problems via Galerkin method, Electron. J. Dierential Equations, Vol. 2004 (2004), 1-10.

\bibitem{CL-} F.J.S.A. Corr\^{e}a and F.P.M. Lopes, Positive solutions for a
class of nonlocal elliptic systems, Comm. Appl. Nonlinear Anal., 14 (2007),
67-77.

\bibitem{CM2} F.J.S.A. Corr\^{e}a and S.D.B. Menezes, Positive solutions for
a class of nonlocal problems, Progress in Nonlinear Differential Equations
and Their Applications, Volume in honor of Djairo G. de Figueiredo, 66
(2005), 195-206.

\bibitem{DDX} W. Deng, Z. Duan, and C. Xie, The blow-up rate for a
degenerate parabolic equation with a nonlocal source, J. Math. Anal. Appl.,
264 (2001), 577-597.

\bibitem{Deng-Li-Chie} W. Deng, Y. Li, and C. Xie, Existence and
nonexistence of global solutions of some nonlocal degenerate parabolic
equations, Appl. Math. Lett., 16 (2003), 803-808.

\bibitem{Deng-Li-Chie2} W. Deng, Y. Lie, and C. Xie, Blow-up and global
existence for a nonlocal degenerate parabolic system, J. Math. Anal. Appl.,
227 (2003), 199-217.

\bibitem{Dong-Fang} G. Dong and X. Fang, Differential equations of
divergence form in separable Musielak\^{a}\euro ``Orlicz\^{a}\euro ``Sobolev
spaces Bound. Value Probl., 2016 (106) (2016), Article 19.

\bibitem{Daco} B. Dacorogna, Introduction to the Calculus of Variations, ICP
London (2004).

\bibitem{Gilbarg-Trudinger} D. Gilbarg and N. S. Trudinger, Elliptic Partial
Differential Equations of Second Order. Springer Verlag, Berlin 2001.

\bibitem{Fan} X. Fan, Differential equations of divergence form in Musielak%
\^{a}\euro ``Sobolev spaces and a sub-supersolution method J. Math. Anal.
Appl., 386 (2) (2012), 593-604

\bibitem{FN0} N. Fukagai and K. Narukawa, Nonlinear eigenvalue problem for a
model equation of an elastic surface, Hiroshima Math. J. 25 (1995), 19\^{a}%
\euro ``41.

\bibitem{FN2} N. Fukagai and K. Narukawa, \textit{On the existence of
multiple positive solutions of quasilinear elliptic eigenvalue problems,}
Ann. Mat. Pura Appl. {\ 186} (2007), 539-564.

\bibitem{FN} N. Fukagai, M. Ito and K. Narukawa, \textit{Positive solutions
of quasilinear elliptic equations with critical Orlicz-Sobolev nonlinearity
on $\mathbb{R}^{N}$,} Funkcial. Ekvac. {\ 49} (2006), 235-267.

\bibitem{kirchhoff} G. Kirchhoff, Mechanik, Teubner, Leipzig, 1883

\bibitem{Lieberman} G. M. Lieberman, The natural generalization of the
natural conditions of Ladyzhenskaya and Ural\^{E}%
%TCIMACRO{\U{b9}}%
%BeginExpansion
${{}^1}$%
%EndExpansion
tseva for elliptic equations. Comm. Partial Differential Equations 16
(1991), no. 2-3, 311\^{a}\euro ``361.

\bibitem{Ma} T.F. Ma, Remarks on an elliptic equation of Kirchhoff type,
Nonlinear Anal., 63 (2005), 1967-1977.

\bibitem{PZ} K. Perera and Z. Zhang, Nontrivial solutions of Kirchhoff type
problems via the Yang index, J. Differential Equations, 221 (2006), 246-255.

\bibitem{rabinowitz} P.H. Rabinowitz, Some global results for nonlinear
eigenvalue problems, J. Funct. Anal. 7 (1971., 487\^{a}\euro ``513;

%\bibitem{R*} V. R$\breve{a}$dulescu, \emph{Nonlinear elliptic equations with
%variable exponent: old and new,}  {Nonlinear Anal. }121 (2015), 336-369.

%\bibitem{RR*} V. R$\breve{a}$dulescu and D. Repov$\breve{s}$, \emph{Partial
%differential equations with variable exponents. Variational methods and
%qualitative analysis.} Monographs and Research Notes in Mathematics. CRC Press, Boca Raton, FL, 2015.

\bibitem{RR} M. N. Rao and Z. D. Ren, Theory of Orlicz Spaces, Marcel
Dekker, New York, (1985).

%\bibitem{Ru} M. Ruzicka, \emph{Electrorheological fluids: Modeling and
%mathematical theory}. Lecture Notes in Math., vol. 1748, Springer-Verlag,
%Berlin (2000).

\bibitem{GG} G.C.G. dos Santos and G. M. Figueiredo, Positive solutions for
a class of nonlocal problems involving Lebesgue generalized spaces: scalar
and system cases. J. Elliptic Parabolic Equ. 2(1-2) (2016), 235-266 .

\bibitem{GGL} G.C.G. dos Santos, G. M. Figueiredo and L. S. Tavares, A
sub-supersolution method for a class of nonlocal problems involving the $p(x)
$-Laplacian operator and applications. Acta Appl. Math. 153 (2018), 171-187.

\bibitem{S} P. Souplet, Uniform blow-up pro les and boundary behavior for di
usion equations with nonlocal nonlinear source, J. Di erential Equations,
153 (1999), 374-406.

\bibitem{Tan-Fang} Z. Tan and F. Fang, \textit{Orlicz-Sobolev versus H\"{o}%
lder local minimizer and multiplicity results for quasilinear elliptic
equations}, J. Math. Anal. Appl., {\ 402} (2013), 348-370.

\bibitem{Yan-Wang} B. Yan, D. Wang, The multiplicity of positive solutions
for a class of nonlocal elliptic problem. J.Math. Anal. Appl. 442(1) (2016),
72-102.

\bibitem{ZP} Z. Zhang and K. Perera, Sign changing solutions of Kirchhoff
type problems via invariant sets of descent flows, J. Math. Anal. Appl., 317
(2006), 456-463.
\end{thebibliography}
\end{document}